\documentclass[a4paper]{article}
\usepackage{a4wide}
\usepackage[utf8]{inputenc}
\usepackage[ngerman, english]{babel}
\usepackage{amsmath,amsthm,amssymb,amsfonts}
\usepackage{mathrsfs}
\usepackage{ifthen}
\usepackage{enumitem}
\usepackage{comment}
\usepackage{subcaption}
\usepackage{hyphsubst}
\usepackage{xcolor}
\usepackage{calc}
\usepackage{graphicx}
\usepackage{pgfplots}
\pgfplotsset{compat=1.5.1}
\usepackage{hyperref}
\usepackage[toc,page]{appendix}
\usepackage{cleveref}
\usepackage{leftindex}
\usepackage{algorithm}
\usepackage{algpseudocode}
\usepackage{booktabs}
\usepackage[bbgreekl]{mathbbol}
\DeclareSymbolFontAlphabet{\mathbbm}{bbold}
\DeclareSymbolFontAlphabet{\mathbb}{AMSb}
\usepackage{bbold}

\newcommand{\jv}[1]{J_{#1}(2|g|)}
\newcommand{\jc}[1]{(\calF c)_{#1}}

\makeatletter
\def\@hspace#1{\begingroup\setlength\dimen@{#1}\hskip\dimen@\endgroup}
\makeatother

\newtheorem{theorem}{Theorem}[section]
\newtheorem{lemma}[theorem]{Lemma}
\newtheorem{definition}[theorem]{Definition}
\newtheorem{remark}[theorem]{Remark}
\newtheorem{corollary}[theorem]{Corollary}

\def\yobs{y^{\text{obs}}}

\def\BopII{\mathcal{B}_{2}}

\def\BopIpsdtr{\mathcal{D}}

\def\cycconv{*_{\text{cyc}}}
\def\Ucont{U_{\theta}}
\newcommand{\trunc}[1]{#1}

\newcommand{\dd}{\mathrm{d}}

\newcommand{\bpm}{\begin{pmatrix}}
\newcommand{\epm}{\end{pmatrix}}

\DeclareMathOperator{\tr}{tr}

\DeclareMathOperator{\argmin}{argmin}

\DeclareMathOperator{\Herm}{Herm}

\newcommand{\setC}{\mathbb{C}}

\newcommand{\setN}{\mathbb{N}}

\newcommand{\setR}{\mathbb{R}}

\newcommand{\setZ}{\mathbb{Z}}

\newcommand{\calB}{\mathcal{B}}

\newcommand{\calF}{\mathcal{F}}
\newcommand{\calG}{\mathcal{G}}

\newcommand{\calI}{\mathcal{I}}
\newcommand{\calJ}{\mathcal{J}}

\newcommand{\calO}{\mathcal{O}}

\newcommand{\calT}{\mathcal{T}}

\definecolor{brickred}{rgb}{0.8, 0.25, 0.33}
\definecolor{bostonuniversityred}{rgb}{0.8, 0.0, 0.0}
\definecolor{cornellred}{rgb}{0.7, 0.11, 0.11}
\definecolor{corn}{rgb}{0.98, 0.93, 0.36}
\definecolor{schoolbusyellow}{rgb}{1.0, 0.85, 0.0}
\definecolor{TUblue}{rgb}{0,102,153}
\colorlet{TUbluelight}{TUblue!30!white}
\usepackage{fancyhdr}
\usepackage[bbgreekl]{mathbbol}
\usepackage{authblk}

\author[1]{Florian Oberender}
\affil[1]{Institut f\"ur Numerische und Angewandte Mathematik, Georg-August Universität Göttingen}
\title{Well-posedness and instability of free electron quantum tomography%
\footnote{The author acknowledges support from DFG, CRC 1456 project 432680300.}}

\begin{document}

\maketitle
\begin{abstract}
\noindent
Recent advancements in photon induced near-field electron microscopy (PINEM) enable the preparation, coherent manipulation and characterization of free-electron quantum states. The available measurement consists of electron energy spectrograms and the goal is the reconstruction of a density matrix representing the quantum state. This requires the solution of a constrained linear inverse problem, where a positive semi-definite trace-class operator is reconstructed given its diagonal in different bases. We show the well-posedness of this problem by exploiting the regularizing effect of the positive semi-definiteness constraint. Unusually, well-posedness in this case does not imply any stability estimates. We show that no global stability estimates exist and any estimator converges arbitrarily slowly. 
We also provide further bounds on the instability generally complementing the analysis done in \cite{Shi:20}. Furthermore, we derive a decomposition of the discretized operator which allows us to study its injectivity and stability properties. It also leads to a faster implementation which we exploit in numerical experiments validating the instability estimates and the stability of the constrained problem.   
\\

\noindent
\textbf{MSC:} 65J20, 
81P18,
65K10, 
47A52
\\

\noindent%
\textbf{Keywords:} quantum state tomography, stability, instability, electron microscopy
\end{abstract}


\section{Introduction}\label{sec:introduction}
Determining the precise state of a quantum system is a difficult problem in experimental quantum physics. For light this was first established by Vogel and Risken \cite{Vogel:89} via homodyne detection and confirmed experimentally in \cite{Smithey:93}. Its mathematical relationship to the Radon transform lead to the name `quantum tomography'. By changing an experimental parameter equivalent to the angle in classical tomography, it is possible to reconstruct the full quantum state from multiple measurements on multiple equally prepared copies of the same quantum system. For electrons this technique has been established much more recently in \cite{Priebe:17} based on photon induced near field electron microscopy (PINEM). There, the interaction between an electron beam and two lasers with different frequencies and a precisely controllable phase shift mediated by a thin graphite film is used to achieve a sufficient number of different measurements to reconstruct the quantum state.  

Mathematically, a mixed quantum state can be described by a density operator \(\rho\), which is a positive semi-definite Hermitian operator with trace equal to \(1\) acting on a finite or infinite dimensional Hilbert space. The quantum tomography measurements are related to the density operator via a linear forward operator \(T\) which we will define precisely later. In \cite{Priebe:17} a new method called SQUIRRELS has been developed for the regularized reconstruction of the density operator from PINEM quantum tomography measurements. The mathematical analysis of the underlying inverse problem was done in \cite{Shi:20} which forms the basis for our work. We will fill the gaps by answering the open questions about stability and instability that were posed there.

We start by defining the relevant spaces and operators. We let \(\BopII\) be the space of Hilbert-Schmidt operators on \(\ell^2(\setZ)\) and let \((e_{l})_{l\in\setZ}\) be the canonical basis of \(\ell^2(\setZ)\). We frequently identify \(\BopII\) with \(\ell^{2}(\setZ\times\setZ)\) by considering for an operator \(\rho\) the scalar products \(\langle e_{l},\rho e_{k}\rangle\) as the corresponding sequence. We then let \(\BopIpsdtr\) be the subset of density operators, which are the positive semi-definite  operators with trace one. We furthermore define the unitary Fourier transform by \(\calF:L^{2}([-\pi,\pi])\rightarrow\ell^{2}(\setZ),f\mapsto (\frac{1}{\sqrt{2\pi}}\int_{-\pi}^{\pi}f(t)e^{-ikt}\dd t)_{k\in\setZ}\). 

As in \cite{Shi:20} we consider the linear forward operator \(T:\BopII\rightarrow L^{2}([-\pi,\pi]\times\setZ)\) given by
\begin{align}\label{al:op_stable1}
    (T\rho)(\theta,l)&=\langle e_{l},\Ucont\rho \Ucont^{*}e_{l}\rangle\notag\\
    \text{ with } \Ucont e_{l}&=\sum_{k\in\setZ}e^{i(k-l)\theta}J_{k-l}(2|g|)e_{k},
\end{align}
where \(g\) is a complex constant related related to the strength of the PINEM coupling. In this form, the special properties of the operator are not obvious and we therefore also introduce the following alternative formulation for \(\Ucont\)
\begin{align}\label{al:op_stable2}
    \Ucont&=\calF M_{a(\cdot+\theta)}\calF^{*}\\
    \text{ with }a&\in L^{2}([-\pi,\pi]),a(t):=\exp(2i|g|\sin(t)),\notag
\end{align}
which is based on modeling a phase modulation of the electron wave function \cite{Gaida:24}. The equivalence of both definitions is shown later in \Cref{lem:equiv_op}. We will see, that most elementary properties shown in \cite{Shi:20} are consequences of this decomposition of \(\Ucont\). In doing this we provide a more direct connection between the physical derivation of the operator and its mathematical properties.

After we defined the operator we formulate the constrained inverse problem
\begin{align}\label{al:problem_cont}
    \underset{\rho\in\BopIpsdtr}{\argmin}\|T\rho-\yobs\|_{L^{2}([-\pi,\pi)\times\setZ)}
\end{align}
with data \(\yobs\in L^{2}([-\pi,\pi]\times\setZ)\).

In \cite{Shi:20} it was shown, that \(T\) is injective and has an unbounded inverse. It was also shown, that if the operator is restricted to the set of density operators with suitable decay conditions on the off-diagonals, the problem becomes stable. However,  the setting described in  \eqref{al:problem_cont}, which is the natural formulation of the reconstruction problem, has not been analyzed yet and its stability was open until now.  However, it has been observed experimentally that solution algorithms which do not employ any explicit regularization lead to reasonable results as well \cite{Jeng:25}. The same practical observations also have been made for quantum tomography for photons, where in many popular methods only the restriction to valid density matrices is used without further regularization \cite{Fiuravsek:01,Bolduc:17,Strandberg:22}. 

We will show in \Cref{sec:stability} that for PINEM quantum tomography the constrained problem is already well-posed in the sense of Hadamard and thereby provide the mathematical underpinning for the observed results. As \(\BopIpsdtr\) is not compact this does not directly imply the existence of any global modulus of continuity, and we show in \Cref{sec:instability} that \(T^{-1}\) is indeed continuous but not uniformly continuous on \(T\BopIpsdtr\). This problem is therefore a rare example of a well-posed problem where still no global convergence rates exist for any possible regularization method. We then further investigate the instability of the problem and provide lower bounds for the cases with decay conditions on the off-diagonals complementing the upper bounds in \cite{Shi:20}. 
In \Cref{sec:discretization} we discuss how the choice of experimental parameters and matrix size influences the stability of the discrete problem and give a condition for its injectivity. There, we also derive a decomposition of the discrete operator which can be exploited for fast implementations using the Fast Fourier Transform. We conclude by conducting numerical experiments which underline the validity of our results in \Cref{sec:numerics}.
\section{Well-posedness}\label{sec:stability}
We first show the equivalence of definitions \eqref{al:op_stable1} and \eqref{al:op_stable2} of \(U_{\theta}\).
\begin{lemma}\label{lem:equiv_op}
For \(a\) as defined before
    \[\calF M_{a(\cdot+\theta)}\calF^{*}e_{l}=\sum_{k\in\setZ}e^{i(k-l)\theta}J_{k-l}(2|g|)e_{k}\]
    holds for all \(l\in\setZ\).
\end{lemma}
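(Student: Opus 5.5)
The plan is a direct computation in three steps: apply \(\calF^{*}\) to \(e_{l}\), multiply by the shifted phase, and read off the Fourier coefficients of the product using the Jacobi--Anger expansion.

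First, with the convention \(\calF f=(\frac{1}{\sqrt{2\pi}}\int_{-\pi}^{\pi}f(t)e^{-ikt}\dd t)_{k}\), the adjoint is \(\calF^{*}c=\frac{1}{\sqrt{2\pi}}\sum_{k}c_{k}e^{ikt}\). Hence \(\calF^{*}e_{l}=\frac{1}{\sqrt{2\pi}}e^{ilt}\). Multiplying by \(a(t+\theta)=\exp(2i|g|\sin(t+\theta))\) gives the function
\[
\frac{1}{\sqrt{2\pi}}\,e^{ilt}\exp\bigl(2i|g|\sin(t+\theta)\bigr).
\]

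Second, I would invoke the Jacobi--Anger (generating function) identity
\[
\exp(iz\sin s)=\sum_{n\in\setZ}J_{n}(z)e^{ins},
\]
which converges absolutely and uniformly in \(s\), since \(|J_{n}(z)|\le (|z|/2)^{|n|}/|n|!\). Setting \(z=2|g|\) and \(s=t+\theta\) yields
\[
\frac{1}{\sqrt{2\pi}}\sum_{n\in\setZ}J_{n}(2|g|)e^{in\theta}e^{i(n+l)t}.
\]
Reindexing with \(k=n+l\) turns this into
\[
\frac{1}{\sqrt{2\pi}}\sum_{k\in\setZ}e^{i(k-l)\theta}J_{k-l}(2|g|)e^{ikt}.
\]

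Third, apply \(\calF\). Because the series converges uniformly, and hence in \(L^{2}\), we may take Fourier coefficients termwise. Orthonormality of \(\frac{1}{\sqrt{2\pi}}e^{ikt}\) then gives \((\calF\,\cdot\,)_{k}=e^{i(k-l)\theta}J_{k-l}(2|g|)\), which is exactly the claimed identity.

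The only nontrivial ingredient is the Jacobi--Anger expansion. I would cite it as a classical identity; it follows, for instance, from the generating function \(\exp(\frac{z}{2}(w-w^{-1}))=\sum_{n}J_{n}(z)w^{n}\) with \(w=e^{is}\). Beyond that, the main care needed is consistency of sign and normalization conventions, so that the factor is \(e^{+i(k-l)\theta}\) and the coefficients are \(J_{k-l}\) rather than \(J_{l-k}\). Both come out correctly with the \(e^{-ikt}\) convention for \(\calF\) and the shift \(\cdot+\theta\) in \(a\).
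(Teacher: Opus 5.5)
Your proposal is correct and follows essentially the same route as the paper: expand the phase \(a(\cdot+\theta)\) in its Fourier series via the Jacobi--Anger identity, multiply by \(\calF^{*}e_{l}=\frac{1}{\sqrt{2\pi}}e^{il\cdot}\), shift the summation index, and read off the coefficients. The only difference is the order of the steps. The paper first works with abstract coefficients \((\calF a)_{k}\) and identifies \((\calF a)_{k}=\sqrt{2\pi}J_{k}(2|g|)\) at the end, whereas you insert the Jacobi--Anger expansion directly and explicitly justify taking Fourier coefficients termwise.
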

\begin{proof}
    We first use the Fourier transform and write \(a(t)=\sum_{k}(\calF a)_{k} \frac{1}{\sqrt{2\pi}}e^{ikt}\). From the properties of the Fourier transform and the Fourier convolution theorem it then follows that
    \begin{align*}
        \calF M_{a(\cdot+\theta)}\calF^{*}e_{l}&=\calF\left(\left(\sum_{k}(\calF a)_{k} \frac{1}{\sqrt{2\pi}}e^{ik(\cdot+\theta)}\right)\cdot(\calF^{*}e_{l})\right)\\
        &=\calF\left(\left(\sum_{k}e^{ik\theta}(\calF a)_{k} \calF^{*}e_{k}\right)\cdot(\calF^{*}e_{l})\right)\\
        &=\sum_{k}e^{ik\theta}(\calF a)_{k} \calF\left(\left(\calF^{*}e_{k}\right)\cdot(\calF^{*}e_{l})\right)\\
        &=\sum_{k}e^{ik\theta}(\calF a)_{k}\frac{1}{\sqrt{2\pi}}e_{k+l}=\sum_{k'}e^{i(k'-l)\theta}(\calF a)_{k'-l}\frac{1}{\sqrt{2\pi}}e_{k'}\\
    \end{align*}
    Now the identity \(e^{iz\sin(t)}=\sum_{k}J_{k}(z)e^{ikt}\) \cite[Eq.~9.1.41]{Abramowitz:65} gives us \((\calF a)_{k}=\sqrt{2\pi}J_{k}(2|g|)\), and inserting this into the previous equation results in the desired equality.
\end{proof}
Having shown that both representations are related in this way we can get the operator decomposition shown in \cite[Proposition 3.1]{Shi:20} in a slightly more general form. The indices indicate in which argument the operator is applied.
\begin{theorem}\label{the:decomp_cont}
For \(T:\BopII\rightarrow L^{2}([-\pi,\pi)\times\setZ)\) given by
\begin{align*}
    (T\rho)(\theta,l)&=\langle e_{l},\calF M_{c(\cdot+\theta)}\calF^{*} \rho\calF M_{\bar{c}(\cdot+\theta)}\calF^{*} e_{l}\rangle\\
\end{align*}
with \(c\in L^{2}([-\pi,\pi])\) the operator \(T\) can be decomposed as
\[T=\calF_{1}^{*}\calF_{2}M_{d_{c}}\calF^{*}_{2}\calG.\]
with
\begin{align*}
    d_{c}(k,t)&:=\frac{1}{\sqrt{2\pi}}\int_{-\pi}^{\pi}c(t-x)\overline{c(-x)}e^{ikx}\dd x 
\end{align*}
and \(\calG:\BopII\rightarrow\ell^{2}(\setZ\times\setZ),(\calG\rho)_{m,n}=\langle e_{n},\rho e_{n+m}\rangle\).
\end{theorem}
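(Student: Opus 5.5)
The plan is to verify the identity entrywise on finitely supported \(\rho\) (finitely many nonzero \(\langle e_{k},\rho e_{j}\rangle\)), where all sums are finite or absolutely convergent, and then extend by density. Here \(\calF_{2}^{*}\) maps the second index \(n\) of \((\calG\rho)_{m,n}\) to a variable \(t\in[-\pi,\pi]\). The multiplier \(d_{c}(m,t)\) then acts, \(\calF_{2}\) maps \(t\) back to an index \(l\), and \(\calF_{1}^{*}\) turns \(m\) into \(\theta\).

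\textbf{Matrix entries.} First I compute the matrix entries of \(U:=\calF M_{c(\cdot+\theta)}\calF^{*}\) and \(V:=\calF M_{\bar c(\cdot+\theta)}\calF^{*}\). The computation in the proof of \Cref{lem:equiv_op} never used the special form of \(a\). It therefore gives, for general \(c\),
\[
\langle e_{l},Ue_{k}\rangle=\tfrac{1}{\sqrt{2\pi}}e^{i(l-k)\theta}(\calF c)_{l-k},\qquad
\langle e_{j},Ve_{l}\rangle=\tfrac{1}{\sqrt{2\pi}}e^{i(j-l)\theta}\overline{(\calF c)_{l-j}},
\]
using \((\calF\bar c)_{p}=\overline{(\calF c)_{-p}}\). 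Inserting these into \((T\rho)(\theta,l)=\sum_{k,j}\langle e_{l},Ue_{k}\rangle\langle e_{k},\rho e_{j}\rangle\langle e_{j},Ve_{l}\rangle\) and substituting \(k=n\), \(j=n+m\) gives
\[
(T\rho)(\theta,l)=\sum_{m}\frac{e^{im\theta}}{\sqrt{2\pi}}\sum_{n}h_{m}(l-n)\,(\calG\rho)_{m,n},\qquad h_{m}(p):=\tfrac{1}{\sqrt{2\pi}}(\calF c)_{p}\overline{(\calF c)_{p-m}}.
\]

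\textbf{Identifying the factors.} The outer sum is exactly \(\calF_{1}^{*}\) applied in the first index. The inner sum is, for each fixed \(m\), a discrete convolution in \(n\) with kernel \(h_{m}\). By the Fourier convolution theorem, used as in \Cref{lem:equiv_op}, it equals \(\calF_{2}M_{\sqrt{2\pi}\,\calF^{*}h_{m}}\calF_{2}^{*}\) applied to \((\calG\rho)_{m,\cdot}\). It therefore remains to check \(\sqrt{2\pi}(\calF^{*}h_{m})(t)=d_{c}(m,t)\). For this I expand \(c(t-x)\) and \(\overline{c(-x)}\) in Fourier series inside the integral defining \(d_{c}\) and integrate in \(x\) using Parseval/orthogonality. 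This yields \(d_{c}(m,t)=\sum_{p}(\calF c)_{p}\overline{(\calF c)_{p-m}}e^{ipt}\) up to the normalising constants, which is the required identity. The careful bookkeeping of the \(\frac{1}{\sqrt{2\pi}}\) factors and of the signs of \(m\) and \(t\) is where errors are most likely. If a sign mismatch appears, I would match conventions by substituting \(x\mapsto -x\) in \(d_{c}\).

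\textbf{Extension to \(\BopII\) (main obstacle).} The step I expect to be the real difficulty is justifying the extension, since \(c\) is only in \(L^{2}\). The way through is the bound \(|d_{c}(k,t)|\le\frac{1}{\sqrt{2\pi}}\|c\|_{L^{2}}^{2}\), which follows from Cauchy--Schwarz and periodicity. Hence \(M_{d_{c}}\) is bounded on \(\ell^{2}(\setZ)\otimes L^{2}\). Moreover \(\calG\) is unitary (a reindexing of \(\ell^{2}(\setZ\times\setZ)\)), and the Fourier factors are unitary. So the right-hand side is a bounded operator on \(\BopII\) that agrees with \(T\) on the dense set of finitely supported \(\rho\). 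Consequently \(T\) extends boundedly and coincides with \(\calF_{1}^{*}\calF_{2}M_{d_{c}}\calF^{*}_{2}\calG\).
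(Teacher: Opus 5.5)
Your proposal is correct and follows the paper's proof. Both expand \((T\rho)(\theta,l)\) entrywise, reindex along off-diagonals so the inner sum becomes a convolution in \(n\), apply the Fourier convolution theorem, and identify the multiplier with \(d_{c}\). The paper does that last step by noting \(c*\tilde c_{k}=\sqrt{2\pi}\,d_{c}(k,\cdot)\) with \(\tilde c_{k}(x)=\overline{c(-x)}e^{ikx}\), while you use Parseval; your constants match exactly, \(\sqrt{2\pi}\,\calF^{*}h_{m}=d_{c}(m,\cdot)\), so no sign change is needed.

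Your density step goes beyond the paper, which argues formally, but ``coincides'' needs one more line. For fixed \((\theta,l)\) the defining formula is \(\langle u,\rho v\rangle\) with \(u_{k}=\overline{\langle e_{l},Ue_{k}\rangle}\), \(v_{j}=\langle e_{j},Ve_{l}\rangle\) and \(\|u\|_{2}=\|v\|_{2}=\|c\|_{L^{2}}/\sqrt{2\pi}\), so it is continuous in \(\rho\). An a.e.\ convergent subsequence of the \(L^{2}\)-convergent images of finitely supported approximants then gives equality on all of \(\BopII\).
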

\begin{proof}
    We generally follow the proof in \cite{Shi:20} with slight modifications. First we get
    \begin{align*}
        (T\rho)(\theta,l)&=\frac{1}{2\pi}\sum_{m,n\in\setZ}e^{i(l-m)\theta}\jc{l-m}\rho_{m,n}e^{i(n-l)\theta}\overline{\jc{l-n}}\\
        &=\frac{1}{2\pi}\sum_{k\in\setZ}e^{ik\theta}\sum_{m\in\setZ}e^{i(l-m)}\jc{l-m}\rho_{m,k+m}\overline{\jc{l-k-m}}\\
        &=\frac{1}{2\pi}\sum_{k\in\setZ}e^{ik\theta}\left[(\rho_{\cdot,k+\cdot})*(\jc{\cdot}\overline{\jc{\cdot-k}})\right](l).
    \end{align*}
    We define \(\tilde{c}_{k}(x):=\overline{c(-x)}e^{ikx}\) such that \(\calF\tilde{c}_{k}=\overline{(\calF c)_{\cdot -k}}\).
    With the definition of \(\calG\) and the Fourier convolution theorem, we compute
    \begin{align*}
        (\calG\rho)(k,\cdot)*(\calF c \cdot\calF\tilde{c}_{k})=\calF\left[\calF^{*}((\calG\rho)(k,\cdot))\cdot(c *\tilde{c}_{k})\right].
    \end{align*}
    Inserting this into the previous formula gives us the desired equality.
\end{proof}
\begin{remark}
    In \cite{Shi:20} it was shown using further integral identities for Bessel functions that
    \[d_{a}(k,t)=\sqrt{2\pi}i^{k}e^{\frac{ikt}{2}}J_{k}\left(4|g|\sin\frac{t}{2}\right)\]
    for \(a\) as in definition \eqref{al:op_stable2} of the PINEM operator.
\end{remark}
In general, for operators of the given form it is sufficient to analyze the zeros of \(d_{c}\) to derive the relevant properties of the whole operator \(T\). For our further analysis in this section it is only required, that \(d_{a}\) has finitely many zeros for each \(k\) and that it is continuous.

To show stability we first show convergence of each off-diagonal separately. To simplify the argument we define for \(k\in\setZ\) the operators \(D_{k}:\ell^{2}(\setZ\times\setZ)\mapsto\ell^{2}(\setZ)\) by
\begin{align*}
    (D_{k}\rho)_{j}:=\rho_{j,j+k}.
\end{align*}
They are defined such that \(D_{k}\) maps to the \(k\)-th off-diagonal of \(\rho\), and enable us to formulate the following lemma.
\begin{lemma}\label{lem:off_diag_conv}
    For \((\rho^{(n)})_{n\in\setN}\subset\BopIpsdtr\) and \(\rho\in\BopIpsdtr\) such that \(T\rho^{(n)}\rightarrow T\rho\) in \(L^{2}([-\pi,\pi)\times\setZ)\),
    \begin{align*}
        \lim_{n\rightarrow\infty}\|D_{k}\rho^{(n)}-D_{k}\rho\|_{2}=0\quad\forall k\in\setZ.
    \end{align*}
\end{lemma}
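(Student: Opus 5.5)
The plan is to use the decomposition of \Cref{the:decomp_cont} with \(c=a\) to reduce the statement to a one-dimensional problem for each fixed off-diagonal. The positivity constraint will supply a uniform sup-norm bound, which controls the behaviour near the finitely many zeros of \(d_{a}(k,\cdot)\).

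\emph{Reduction to one slice.} Set \(\sigma^{(n)}:=\rho^{(n)}-\rho\). In \(T=\calF_{1}^{*}\calF_{2}M_{d_{a}}\calF^{*}_{2}\calG\) the factors \(\calF_{1}^{*}\), \(\calF_{2}\) are unitary, so
\[\|T\sigma^{(n)}\|=\|M_{d_{a}}\calF_{2}^{*}\calG\sigma^{(n)}\|_{L^{2}}.\]
Write \(f^{(n)}_{k}:=\calF^{*}\big((\calG\sigma^{(n)})(k,\cdot)\big)\in L^{2}([-\pi,\pi])\). Since \((\calG\sigma^{(n)})(k,\cdot)\) is exactly the off-diagonal \(D_{k}\sigma^{(n)}\), up to the index convention, we have \(\|f^{(n)}_{k}\|_{L^{2}}=\|D_{k}\rho^{(n)}-D_{k}\rho\|_{2}\). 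The squared norm above equals \(\sum_{k}\int|d_{a}(k,t)|^{2}|f_{k}^{(n)}(t)|^{2}\,\dd t\). Each summand therefore tends to \(0\) separately, and it remains to show \(f_{k}^{(n)}\to0\) in \(L^{2}\) for fixed \(k\).

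\emph{Uniform bound from positivity.} For \(\tau\in\BopIpsdtr\), positive semi-definiteness gives \(|\tau_{j,j+k}|\le\sqrt{\tau_{j,j}\tau_{j+k,j+k}}\). Cauchy–Schwarz then yields
\[\|D_{k}\tau\|_{1}\le\Big(\sum_{j}\tau_{j,j}\Big)^{1/2}\Big(\sum_{j}\tau_{j+k,j+k}\Big)^{1/2}=\tr\tau=1.\]
Hence \(\|D_{k}\sigma^{(n)}\|_{1}\le2\), and the inverse Fourier series is uniformly bounded: \(\|f^{(n)}_{k}\|_{\infty}\le 2/\sqrt{2\pi}\) for all \(n\). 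This is the step where the constraint \(\rho^{(n)}\in\BopIpsdtr\) is essential. Without it, mass of \(f_{k}^{(n)}\) could concentrate at the zeros of \(d_{a}(k,\cdot)\), and this is precisely the mechanism behind the unboundedness of \(T^{-1}\).

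\emph{Splitting near the zeros.} Fix \(k\) and \(\varepsilon>0\). By assumption \(d_{a}(k,\cdot)\) is continuous with finitely many zeros in \([-\pi,\pi]\). Choose an open set \(E_{\varepsilon}\) consisting of small intervals around these zeros with \(|E_{\varepsilon}|\le\varepsilon\). On the compact complement, continuity gives \(|d_{a}(k,t)|\ge\delta_{\varepsilon}>0\). Then
\[\|f_{k}^{(n)}\|_{L^{2}}^{2}\le\frac{4}{2\pi}\,\varepsilon+\delta_{\varepsilon}^{-2}\int|d_{a}(k,t)|^{2}|f_{k}^{(n)}(t)|^{2}\,\dd t.\]
The second term tends to \(0\) by the first step, so \(\limsup_{n}\|f_{k}^{(n)}\|^{2}\le 2\varepsilon/\pi\). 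Since \(\varepsilon\) is arbitrary, this gives the claim.

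The main obstacle I expect is bookkeeping rather than analysis: matching the index conventions of \(\calG\), \(D_{k}\) and the slice variable of \(\calF_{2}\), and checking that the explicit \(d_{a}\) from the Remark indeed has only finitely many zeros for every \(k\). For \(k\neq0\), note that \(t=0\) is always a zero, but it is still an isolated one, so the argument goes through.
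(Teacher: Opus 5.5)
Your proposal is correct and follows essentially the same route as the paper. Positivity and the trace constraint give $\|D_k\tau\|_1\le 1$, hence a uniform sup-norm bound on $\calF^*(D_k\rho^{(n)}-D_k\rho)$. That bound controls the integral over a small-measure set around the zeros of $d_a(k,\cdot)$, while on the complement $|d_a(k,\cdot)|$ is bounded below and the $k$-th slice of $\|T\rho^{(n)}-T\rho\|^2$ forces convergence. The remaining differences are cosmetic: Cauchy--Schwarz instead of AM--GM for the $\ell^1$ bound, neighbourhoods of the zeros instead of the sublevel set $A_{\delta,k}$, and tracking the factor $1/\sqrt{2\pi}$.
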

\begin{proof}
We fix \(k\) and take \(\epsilon>0\). Then there exists a \(\delta>0\) such that the set \[A_{\delta,k}:=\{|d_{a}(k,\cdot)|<\delta\}\] has Lebesgue measure smaller than \(\epsilon\).
This follows immediately from the continuity of \(d_{a}(\cdot,k)\) combined with the fact that it has only finitely many discrete zeros for each fixed \(k\).

We then have
\begin{align*}
    \|D_{k}\rho^{(n)}-D_{k}\rho\|_{2}^{2}&=\int_{-\pi}^{\pi}|\calF^{*}(D_{k}\rho^{(n)})(s)-\calF^{*}(D_{k}\rho)(s)|^{2}\dd s\\
    &=\int_{A_{\delta,k}}|\calF^{*}(D_{k}\rho^{(n)})(s)-\calF^{*}(D_{k}\rho)(s)|^{2}\dd s\\
    &+\int_{[-\pi,\pi]\setminus A_{\delta,k}}|\calF^{*}(D_{k}\rho^{(n)})(s)-\calF^{*}(D_{k}\rho)(s)|^{2}\dd s
\end{align*}
From the positive semidefiniteness constraint and the AM-GM inequality, we get \[|\rho_{k,l}|\leq\sqrt{\rho_{k,k}\rho_{l,l}}\leq\frac{\rho_{k,k}+\rho_{l,l}}{2}.\]
We use this for the first summand together with the trace constraint and get
\begin{align*}
    \sum_{j}|(D_{k}\rho)|_{j}=\sum_{j}|\rho_{j,j+k}|\leq\sum_{j}\frac{\rho_{j,j}+\rho_{j+k,j+k}}{2}=1.
\end{align*}
This implies that \(D_{k}(\BopIpsdtr)\subset\ell^{1}(\setZ)\) and we can now use that the Fourier transform restricted to \(\ell^{1}(\setZ)\) is a bounded linear operator to \(C([-\pi,\pi))\) with norm \(1\). This leads to
\begin{align*}
    \int_{A_{\delta,k}}|\calF^{*}(D_{k}\rho^{(n)})(s)-\calF^{*}(D_{k}\rho)(s)|^{2}\dd s&\leq \epsilon \sup_{s\in A_{\delta,k}}|\calF^{*}(D_{k}\rho^{(n)})(s)-\calF^{*}(D_{k}\rho)(s)|^{2}\\
    &\leq \epsilon (\|\calF^{*}(D_{k}\rho^{(n)})\|_{\infty}+\|\calF^{*}(D_{k}\rho)\|_{\infty})^{2}\\
    &\leq \epsilon (\|D_{k}\rho^{(n)}\|_{1}+\|D_{k}\rho\|_{1})^2\leq 4\epsilon.
\end{align*}
For the second summand we take \(n\) large enough such that \(\frac{1}{\delta}\|T\rho^{(n)}-T\rho\|_{2}\leq\sqrt{\epsilon}\) and get 
\begin{align*}
    \epsilon&\geq\frac{1}{\delta^{2}}\|T \rho^{(n)}-T\rho\|_{2}^{2}\geq\frac{1}{\delta^{2}}\sum_{k'} \int_{-\pi}^{\pi}|d_{a}(s,k')|^{2}|\calF^{*}(D_{k'}\rho^{(n)})(s)-\calF^{*}(D_{k'}\rho)(s)|^{2}\dd s\\
    &\geq \frac{1}{\delta^{2}}\int_{[-\pi,\pi]\setminus A_{\delta,k}}|d_{a}(s,k)|^{2}|\calF^{*}(D_{k}\rho^{(n)})(s)-\calF^{*}(D_{k}\rho)(s)|^{2}\dd s\\
    &\geq \int_{[-\pi,\pi]\setminus A_{\delta,k}}|\calF^{*}(D_{k}\rho^{(n)})(s)-\calF^{*}(D_{k}\rho)(s)|^{2}\dd s
\end{align*}
Combined we get that for each \(k\) we can find \(N\) large enough, such that for all \(n\geq N\)
\[\|D_{k}\rho^{(n)}-D_{k}\rho\|_{2}^{2}\leq (1+4)\epsilon.\]
\end{proof}

It turns out that for \(\rho\in\BopIpsdtr\) off-diagonal-wise convergence is already enough to get convergence in \(\BopII\). For this we will use the following theorem of Riesz.
\begin{theorem}{\cite{Riesz:60},\cite[Corollary 5.5]{Elstrodt:02}}
    For \((f_{n})_{n\in\setN}\subset L^{p}\) and \(f_{n}\rightarrow f\in L^{p}\) point-wise almost everywhere the following two statements are equivalent.
    \begin{enumerate}
        \item \(||f_{n}||_{p}\rightarrow ||f||_{p}\)\\\item\(||f_{n}-f||_{p}\rightarrow 0.\)
    \end{enumerate}
\end{theorem}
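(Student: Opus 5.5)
The plan is to prove the two implications separately, assuming throughout \(1\leq p<\infty\) on an arbitrary measure space. This is the only case needed later, where the theorem is applied with \(p=2\) and counting measure on \(\setZ\times\setZ\). For \(p=\infty\) the statement is false: on \([0,1]\) take \(f_{n}=\mathbb{1}_{[0,1/n]}+\mathbb{1}_{[1/2,1]}\) and \(f=\mathbb{1}_{[1/2,1]}\). Then \(f_{n}\rightarrow f\) almost everywhere and \(\|f_{n}\|_{\infty}=\|f\|_{\infty}=1\), yet \(\|f_{n}-f\|_{\infty}=1\) for all \(n\geq 3\).

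The implication from the second to the first statement is immediate from the reverse triangle inequality
\[\bigl|\|f_{n}\|_{p}-\|f\|_{p}\bigr|\leq\|f_{n}-f\|_{p},\]
and it needs no pointwise convergence at all.

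For the implication from the first to the second statement I would use Fatou's lemma applied to a nonnegative auxiliary sequence. Convexity of \(s\mapsto s^{p}\) gives the elementary pointwise inequality
\[|f_{n}-f|^{p}\leq 2^{p-1}\left(|f_{n}|^{p}+|f|^{p}\right).\]
So the functions
\[g_{n}:=2^{p-1}\left(|f_{n}|^{p}+|f|^{p}\right)-|f_{n}-f|^{p}\]
are nonnegative. By the almost everywhere convergence \(f_{n}\rightarrow f\), they converge pointwise almost everywhere to \(2^{p}|f|^{p}\). Fatou's lemma then yields
\[2^{p}\int|f|^{p}\leq\liminf_{n\rightarrow\infty}\int g_{n}=2^{p-1}\lim_{n\rightarrow\infty}\int|f_{n}|^{p}+2^{p-1}\int|f|^{p}-\limsup_{n\rightarrow\infty}\int|f_{n}-f|^{p}.\]
In the last equality the hypothesis \(\|f_{n}\|_{p}\rightarrow\|f\|_{p}\) is used to replace \(\int|f_{n}|^{p}\) by a genuine limit, which allows the \(\liminf\) to be split. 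The right-hand side therefore equals \(2^{p}\int|f|^{p}-\limsup_{n}\int|f_{n}-f|^{p}\).

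Since \(f\in L^{p}\), the quantity \(\int|f|^{p}\) is finite and can be cancelled on both sides. This leaves \(\limsup_{n}\|f_{n}-f\|_{p}^{p}\leq 0\), which is the second statement.

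The only delicate point is this bookkeeping step: splitting the \(\liminf\) of the sum is legitimate only because one summand converges, and the cancellation requires \(\|f\|_{p}<\infty\). Otherwise the argument is a routine application of Fatou's lemma.
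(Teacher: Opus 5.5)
Your proof is correct. The paper gives no proof of this statement; it only cites Riesz and Elstrodt and uses the result as a black box, so your argument supplies a self-contained proof rather than varying one in the paper.

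Your Fatou argument is one half of Pratt's lemma, which the paper itself invokes in the proof of \Cref{lem:conv_density}. Set \(G_{n}:=2^{p-1}(|f_{n}|^{p}+|f|^{p})\). Then \(0\leq|f_{n}-f|^{p}\leq G_{n}\), \(G_{n}\rightarrow 2^{p}|f|^{p}\) almost everywhere, and \(\int G_{n}\rightarrow 2^{p}\int|f|^{p}<\infty\). Pratt's lemma therefore gives \(\int|f_{n}-f|^{p}\rightarrow 0\) directly. Your \(\liminf\)/\(\limsup\) bookkeeping is exactly the proof of that lemma, specialized to this dominating sequence.

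What your write-up adds over the paper's formulation is the explicit restriction \(1\leq p<\infty\). The paper omits it, but it is necessary, as your correct counterexample for \(p=\infty\) shows.

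One small inaccuracy is in your remark about which case is needed. The theorem is not only applied with \(p=2\) on \(\setZ\times\setZ\). In the proof of \Cref{lem:conv_density} it is also used:
\begin{enumerate}
\item with \(p=2\) on \(\ell^{2}(\setZ)\), for \(\|D_{k}\rho^{(n)}\|_{2}\rightarrow\|D_{k}\rho\|_{2}\);
\item with \(p=1\) on \(\ell^{1}(\setZ)\), both for \(b^{(n)}\rightarrow b\) and for the sequence \((\|D_{k}\rho^{(n)}\|_{2}^{2})_{k\in\setZ}\).
\end{enumerate}
You prove the statement for every \(1\leq p<\infty\) on an arbitrary measure space, so this does not affect your argument.
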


This together with Pratt's lemma \cite{Pratt:60} enables us to prove an important lemma that underlines the regularizing effect of the positive semi-definiteness and trace constraint.
\begin{lemma}\label{lem:conv_density}
    For a sequence \((\rho^{(n)})_{n\in\setN}\subset\BopIpsdtr\) and \(\rho\in\BopIpsdtr\) such that 
    \[\lim_{n\rightarrow\infty}\|D_{k}\rho^{(n)}-D_{k}\rho\|_{2}=0\quad\forall k\in\setZ\]
    it holds that
    \[\rho^{(n)}\rightarrow\rho\]
    in \(\BopII\).
\end{lemma}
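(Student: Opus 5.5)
We view $\BopII$ as $\ell^{2}(\setZ\times\setZ)$ and aim to apply the theorem of Riesz with $p=2$ on the counting-measure space $\setZ\times\setZ$. Pointwise convergence is immediate: for fixed $(j,j+k)$,
\[|\rho^{(n)}_{j,j+k}-\rho_{j,j+k}|\leq\|D_{k}\rho^{(n)}-D_{k}\rho\|_{2}\rightarrow 0 .\]
It therefore suffices to show $\|\rho^{(n)}\|_{\BopII}\rightarrow\|\rho\|_{\BopII}$. Since the norms are not controlled directly, we obtain this from a dominated-convergence argument in the form of Pratt's lemma.

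For a density operator, the trace and positivity constraints give the bound $|\rho_{j,l}|^{2}\leq\rho_{j,j}\rho_{l,l}$. We therefore use the dominating functions $g_{n}(j,l):=\rho^{(n)}_{j,j}\rho^{(n)}_{l,l}$ and $g(j,l):=\rho_{j,j}\rho_{l,l}$, and check Pratt's hypotheses:
\begin{enumerate}
\item $0\leq|\rho^{(n)}_{j,l}|^{2}\leq g_{n}(j,l)$ pointwise.
\item $g_{n}\rightarrow g$ pointwise, because each diagonal entry converges. This is the case $k=0$ of the hypothesis.
\item $\sum_{j,l}g_{n}(j,l)=(\tr\rho^{(n)})^{2}=1=\sum_{j,l}g(j,l)$.
\end{enumerate}
Pratt's lemma (generalized dominated convergence) then yields
\[\sum_{j,l}|\rho^{(n)}_{j,l}|^{2}\rightarrow\sum_{j,l}|\rho_{j,l}|^{2},\]
that is, $\|\rho^{(n)}\|_{\BopII}\rightarrow\|\rho\|_{\BopII}$. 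Together with the pointwise convergence, Riesz's theorem gives $\|\rho^{(n)}-\rho\|_{\BopII}\rightarrow0$.

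The main obstacle is the convergence of the integrals of the dominating functions, which requires the trace to be preserved exactly. This holds because the limit $\rho$ is assumed to lie in $\BopIpsdtr$ and therefore has trace one. Without that assumption, mass could escape to infinity along the diagonal and the diagonals would converge only in $\ell^{2}$.

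Alternatively, one can note directly that $\ell^{2}$ convergence of the diagonal, together with nonnegativity and equal $\ell^{1}$ norms, forces $\ell^{1}$ convergence of the diagonal by Scheffé's lemma. Then $g_{n}\rightarrow g$ in $\ell^{1}(\setZ\times\setZ)$, which gives the same conclusion.
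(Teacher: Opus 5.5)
Your proof is correct. It uses the same ingredients as the paper: the bound $|\rho_{j,l}|^{2}\leq\rho_{j,j}\rho_{l,l}$, Pratt's lemma, and Riesz' theorem. The difference is that you apply Pratt's lemma once, directly on $\setZ\times\setZ$, with the product dominating function $\rho^{(n)}_{j,j}\rho^{(n)}_{l,l}$.

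The paper first sums along off-diagonals. It works on $\setZ$ with $\|D_{k}\rho^{(n)}\|_{2}^{2}$ dominated by $b^{(n)}_{k}=\sum_{j}\rho^{(n)}_{j,j}\rho^{(n)}_{j+k,j+k}$. This costs it several extra steps:
\begin{enumerate}
\item proving $b^{(n)}_{k}\rightarrow b_{k}$ via the shift-operator estimate;
\item upgrading this to $\ell^{1}$ convergence with an additional application of Riesz' theorem;
\item using Riesz once more to pass from $\sum_{k}\|D_{k}\rho^{(n)}\|_{2}^{2}$ to the $\BopII$-norm.
\end{enumerate}

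Your version avoids all of this. Pointwise convergence of the diagonal and the exact identity $\sum_{j,l}\rho^{(n)}_{j,j}\rho^{(n)}_{l,l}=(\tr\rho^{(n)})^{2}=1=\sum_{j,l}\rho_{j,j}\rho_{l,l}$ are all that Pratt's lemma requires. Only convergence of the integrals of the dominating functions is needed, not their $\ell^{1}$ convergence. The paper's aggregated formulation in exchange matches the off-diagonal structure of $T$. It also sets up the remark after the stability theorem that the constraints automatically make the off-diagonal norms summable.

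Your closing observation is correct and is the essential point in both proofs. The limit must have trace exactly one, and without it the lemma fails, since mass can spread out along the diagonal. One small wording fix: the entrywise bound $|\rho_{j,l}|^{2}\leq\rho_{j,j}\rho_{l,l}$ comes from positivity alone. The trace constraint enters only through the normalisation of $\sum_{j,l}g_{n}$.
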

\begin{proof}
    We take a sequence \((\rho^{(n)})_{n\in\setN}\) and \(\rho\) with the required properties. Then by Riesz' theorem we get
    \[\lim_{n\rightarrow\infty}\|D_{k}\rho^{(n)}\|_{2}=\|D_{k}\rho\|_{2}\quad\forall k\in\setZ.\]

    From the positive semi-definiteness, we get the inequality \(|\rho_{j,k}|^{2}\leq \rho_{j,j}\rho_{k,k}\). Summing them up leads to
    \begin{align*}
        \|D_{k}\rho\|^{2}_{2}&=\sum_{j}|\rho_{j,j+k}|^{2}\leq\sum_{j}\rho_{j,j}\rho_{j+k,j+k}=:b_{k}\\
        \|D_{k}\rho^{(n)}\|^{2}_{2}&=\sum_{j}|\rho^{(n)}_{j,j+k}|^{2}\leq\sum_{j}\rho_{j,j}^{(n)}\rho_{j+k,j+k}^{(n)}=:b^{(n)}_{k}.
    \end{align*}
    We denote the shift operator that shifts forwards \(k\) times by \(S_{k}:\ell^{2}(\setZ)\rightarrow\ell^{2}(\setZ)\) and get
    \begin{align*}
        b^{(n)}_{k}&=\langle S_{k}D_{0}\rho^{(n)},D_{0}\rho^{(n)}\rangle_{\ell^{2}}\\
        b_{k}&=\langle S_{k}D_{0}\rho,D_{0}\rho\rangle_{\ell^{2}}.
    \end{align*}
    We then have for all \(k\in\setZ\)
    \begin{align*}
        |b^{(n)}_{k}-b_{k}|&=|\langle S_{k}D_{0}\rho^{(n)},D_{0}\rho^{(n)}\rangle_{\ell^{2}}-\langle S_{k}D_{0}\rho,D_{0}\rho\rangle_{\ell^{2}}|\\
        &\leq|\langle S_{k}(D_{0}\rho^{(n)}-D_{0}\rho),D_{0}\rho^{(n)}\rangle_{\ell^{2}}|+|\langle S_{k}D_{0}\rho,D_{0}\rho^{(n)}-D_{0}\rho\rangle_{\ell^{2}}|\\
        &\leq\|D_{0}\rho^{(n)}-D_{0}\rho\|_{\ell^{2}}\|D_{0}\rho^{(n)}\|_{\ell^{2}}+\|D_{0}\rho^{(n)}-D_{0}\rho\|_{\ell^{2}}\|D_{0}\rho\|_{\ell^{2}}\\
        &\leq\|D_{0}\rho^{(n)}-D_{0}\rho\|_{\ell^{2}}\|D_{0}\rho^{(n)}\|_{\ell^{1}}+\|D_{0}\rho^{(n)}-D_{0}\rho\|_{\ell^{2}}\|D_{0}\rho\|_{\ell^{1}}=2\|D_{0}\rho^{(n)}-D_{0}\rho\|_{\ell^{2}}.
    \end{align*}
    This means that for each \(k\) we have \(\lim_{n\rightarrow\infty}|b^{(n)}_{k}-b_{k}|=0\). Furthermore we get from the non-negativity of the diagonal entries of \(\rho\)
    \begin{align*}
        \sum_{k}|b_{k}|=\sum_{k}|\sum_{j}\rho_{j+k,j+k}\rho_{j,j}|=\sum_{j,k}\rho_{j+k,j+k}\rho_{j,j}=\left(\sum_{j}\rho_{j,j}\right)^{2}=1.
    \end{align*}
    In the same way we also get \(\sum_{k}|b^{(n)}_{k}|=1\) for all \(n\). Now we can view \(b^{(n)}\) as a sequence of sequences in \(\ell^{1}\) which converges point-wise to \(b\) which is again in \(\ell^{1}\). As the norms are always \(1\), we can apply Riesz' theorem and get that \(b^{(n)}\) converges to \(b\) in \(\ell^{1}\).

    Because \(b^{(n)}\) converges to \(b\) in \(\ell^{1}\) and \(\|D_{k}\rho^{(n)}\|^{2}_{2}\) converges to \(\|D_{k}\rho\|^{2}_{2}\) for all \(k\), by Pratt's lemma (see \cite{Pratt:60}) \(\|D_{k}\rho^{(n)}\|^{2}_{2}\) converges to \(\|D_{k}\rho\|^{2}_{2}\) in \(\ell^{1}\) meaning
    \begin{align*}
        \lim_{n\rightarrow\infty}\sum_{k}\left|\|D_{k}\rho^{(n)}\|^{2}_{2}-\|D_{k}\rho\|^{2}_{2}\right|=0.
    \end{align*}
    Now we can apply Riesz' theorem to get
    \begin{align*}
        &\lim_{n\rightarrow\infty}\sum_{k}\left|\|D_{k}\rho^{(n)}\|^{2}_{2}\right|=\sum_{k}\left|\|D_{k}\rho\|^{2}_{2}\right|,
    \end{align*}
    and we can furthermore calculate
    \begin{align*}
        \sum_{k}\left|\|D_{k}\rho\|^{2}_{2}\right|=\sum_{j,k}|\rho_{j,j+k}|^2=\sum_{j,k}|\rho_{j,k}|^2=\|\rho\|_{\BopII}^{2}.
    \end{align*}
    The same is again true for \(\rho^{(n)}\). Using the point-wise convergence of \(\rho^{(n)}\) to \(\rho\) and applying Riesz' theorem for a final time we then get that \(\rho^{(n)}\) converges to \(\rho\) in \(\BopII\).
\end{proof}
Combining lemmas \ref{lem:off_diag_conv} and \ref{lem:conv_density}  immediately gives us the remaining stability. 
\begin{theorem}\label{the:stability}
    For \((\rho^{(n)})_{n\in\setN}\subset\BopIpsdtr\) and \(\rho\in\BopIpsdtr\) such that \(T\rho^{(n)}\rightarrow T\rho\) in \(L^{2}([-\pi,\pi)\times\setZ)\),
    \begin{align*}
        \lim_{n\rightarrow\infty}\|\rho^{(n)}-\rho\|_{\BopII}=0.
    \end{align*}
\end{theorem}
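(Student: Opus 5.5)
The plan is to chain the two preceding lemmas directly; the text announces the theorem as an immediate consequence of them. Take $(\rho^{(n)})_{n\in\setN}\subset\BopIpsdtr$ and $\rho\in\BopIpsdtr$ with $T\rho^{(n)}\to T\rho$ in $L^{2}([-\pi,\pi)\times\setZ)$.

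\emph{Step 1.} Apply \Cref{lem:off_diag_conv}. Its hypotheses are exactly those of the theorem, so for every fixed $k\in\setZ$ we get $\|D_{k}\rho^{(n)}-D_{k}\rho\|_{2}\to0$. This is where the decomposition of \Cref{the:decomp_cont} enters: $T$ acts on each off-diagonal through multiplication by $d_{a}(k,\cdot)$ in the Fourier domain. Two inputs then control the convergence:
\begin{itemize}
\item the finitely many zeros of $d_{a}(k,\cdot)$, which confine the bad set $A_{\delta,k}$ to small measure;
\item the uniform $\ell^{1}$ bound $\|D_{k}\rho\|_{1}\le1$ coming from positive semi-definiteness and the trace constraint, which controls the contribution from $A_{\delta,k}$.
\end{itemize}

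\emph{Step 2.} Feed the conclusion of Step 1 into \Cref{lem:conv_density}. Since all $\rho^{(n)}$ and $\rho$ lie in $\BopIpsdtr$, its hypotheses are satisfied and it yields $\rho^{(n)}\to\rho$ in $\BopII$, i.e.\ $\|\rho^{(n)}-\rho\|_{\BopII}\to0$. That lemma upgrades off-diagonal-wise convergence to full Hilbert--Schmidt convergence. It does so through the Riesz and Pratt arguments, using the summable dominating sequences $b_{k}^{(n)}$ built from the diagonals.

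Since both lemmas are already established, there is no real obstacle in this step. The only point to check is that the limit $\rho$ is assumed to lie in $\BopIpsdtr$, as both lemmas require. It does, by hypothesis, so the proof reduces to these two invocations.

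One could add a remark: well-posedness in Hadamard's sense follows. Injectivity of $T$ on $\BopIpsdtr$ is known from \cite{Shi:20}, and the theorem gives continuity of $T^{-1}$ on $T\BopIpsdtr$. This continuity is only sequential and pointwise, not uniform, since no uniform constants appear in Step 1; this is consistent with the announced instability results.
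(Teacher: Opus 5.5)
Your proposal is correct and is exactly the paper's argument: the paper also obtains the theorem by combining \Cref{lem:off_diag_conv} (off-diagonal-wise convergence from convergence of the data) with \Cref{lem:conv_density} (upgrading off-diagonal-wise convergence to Hilbert--Schmidt convergence on $\BopIpsdtr$). Your closing remark that the resulting continuity of $T^{-1}$ is not uniform is likewise consistent with \Cref{the:not_uniform_cont}.
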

Together with the injectivity shown in \cite{Shi:20} this implies that the problem \eqref{al:problem_cont} is well-posed for \(\yobs\in T\BopIpsdtr\).

From the structure of the proof we observe that we replaced the previously required decay conditions with a weaker decay condition which assumes that the norms of the off-diagonals form a sequence in \(\ell^{1}\). This weaker decay requirement then is directly fulfilled because of the constraints. As a trade-off this proof does not provide any rates and we investigate this in the next section.

\section{Instability}\label{sec:instability}
So far, we have established the stability or equivalently the continuity of the inverse when we restrict the operator to the set of density operators \(\BopIpsdtr\), but we have not shown any rates or determined the modulus of continuity. In this section, we will see that the inverse of the restricted operator is not uniformly continuous so no global rates or a global modulus of continuity exist. Additionally, we show instability estimates which complement the rates shown in \cite{Shi:20} with decay conditions.

First, we prove that the inverse is not uniformly continuous on \(T\BopIpsdtr\).
\begin{theorem}\label{the:not_uniform_cont}
   The operator \(T^{-1}:T\BopIpsdtr\rightarrow\BopIpsdtr\) is not uniformly continuous. This means there exists \(\epsilon>0\) such that for all \(\delta>0\) one can find \(\rho,\sigma\in \BopIpsdtr\) such that
   \[\|T\rho-T\sigma\|_{2}<\delta\text{ and } \|\rho-\sigma\|_{\calB_{2}}>\epsilon.\]
\end{theorem}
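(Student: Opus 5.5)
We will construct explicit pairs of pure states whose images under \(T\) are arbitrarily close while the states themselves stay a fixed distance apart. The idea is to exploit the zeros of \(d_{a}(k,\cdot)\) for a fixed off-diagonal index, say \(k=1\), and the decomposition of \Cref{the:decomp_cont}. That decomposition shows that \(T\rho\) depends on \(\rho\) only through the functions \(d_{a}(k,\cdot)\,\calF^{*}(D_{k}\rho)\), and that \(\|T\rho\|_{2}^{2}=\sum_{k}\|d_{a}(k,\cdot)\calF^{*}(D_{k}\rho)\|_{2}^{2}\), since the outer Fourier transforms are unitary. By the formula \(d_{a}(k,t)=\sqrt{2\pi}i^{k}e^{ikt/2}J_{k}(4|g|\sin\frac{t}{2})\), for \(k\geq1\) the function \(d_{a}(k,\cdot)\) vanishes at \(t=0\).

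Concretely, take two states with identical diagonals that differ only in the sign of the first off-diagonal. Let \(\psi\in\ell^{2}(\setZ)\) be a unit vector spread over many indices. Set \(\rho=\psi\psi^{*}\), and let \(\sigma\) be the state of \(\tilde\psi_{j}=(-1)^{j}\psi_{j}\) mixed with \(\rho\); more simply, take \(\sigma=\phi\phi^{*}\) with \(\phi_{j}=e^{ij\alpha}\psi_{j}\). Then \(\sigma\) has the same diagonal as \(\rho\), and its off-diagonals are \(D_{k}\sigma=e^{ik\alpha}D_{k}\rho\), up to the conjugation convention. This modulation does not work on its own: conjugating by \(e^{ij\alpha}\) amounts to the shift \(\theta\mapsto\theta+\alpha\), which leaves \(T\) nearly invariant only in a trivial sense.

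I will therefore instead take \(\rho=\frac12(\psi\psi^{*}+\phi\phi^{*})\) and \(\sigma=\frac12(\psi'\psi'^{*}+\phi'\phi'^{*})\), chosen so that the off-diagonals \(D_{k}\) are concentrated in Fourier space near \(t=0\). Choose \(\psi_{j}=N^{-1/2}\) for \(0\le j<N\). Then \(D_{k}\rho\) is approximately constant \(1/N\) on about \(N\) entries, so \(\calF^{*}D_{k}\rho\) is a Fejér-type bump of width \(1/N\) around \(t=0\). There \(|d_{a}(k,t)|\lesssim |t|^{|k|}\) for \(k\neq0\), which gives \(\|d_{a}(k,\cdot)\calF^{*}D_{k}\rho\|_{2}\lesssim N^{-|k|}\|D_{k}\rho\|_{2}\). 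By contrast, \(\|D_{k}\rho\|_{2}\approx N^{-1/2}\). Both go to zero, so the normalization must be arranged more carefully. The Hilbert–Schmidt distance between pure states is \(\sqrt{2(1-|\langle\psi,\phi\rangle|^{2})}\), which is of order one, while each individual off-diagonal has small norm. The difference is spread over many off-diagonals \(k\lesssim N\), which is compatible with \Cref{lem:off_diag_conv}.

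The plan is therefore as follows. Take \(\psi=N^{-1/2}\mathbb 1_{[0,N)}\) and \(\phi_{j}=N^{-1/2}(-1)^{j}\mathbb 1_{[0,N)}(j)\), which makes the two vectors nearly orthogonal, so that \(\|\rho-\sigma\|_{\calB_{2}}\approx\sqrt2\) with \(\rho=\psi\psi^{*}\) and \(\sigma=\phi\phi^{*}\). Then \(D_{k}\sigma=(-1)^{k}D_{k}\rho\), so
\[\|T\rho-T\sigma\|_{2}^{2}=\sum_{k\text{ odd}}4\|d_{a}(k,\cdot)\calF^{*}D_{k}\rho\|_{2}^{2}.\]
The remaining estimate is the main obstacle. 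One needs to show that \(\sum_{k\ \mathrm{odd}}\|d_{a}(k,\cdot)\calF^{*}D_{k}\rho\|_{2}^{2}\to0\) as \(N\to\infty\). Here \(D_{k}\rho=N^{-1}\mathbb 1_{[0,N-|k|)}\), so \(\calF^{*}D_{k}\rho\) is a Dirichlet kernel of mass concentrated in \(|t|\lesssim 1/(N-|k|)\) with \(L^{2}\) norm \(\sqrt{N-|k|}/N\). For \(|k|\le N/2\) I will bound \(|d_{a}(k,t)|\le\sqrt{2\pi}\min(1,(2|g||t|)^{|k|}/|k|!)\) and split the integral into \(|t|<N^{-1/2}\) and its complement, controlling the tail with the \(1/(N|t|^{2})\) decay of the squared Dirichlet kernel. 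For \(|k|>N/2\) the bound \(\|D_{k}\rho\|_{2}^{2}\le (N-|k|)/N^{2}\) summed over \(k\) is small only when \(|k|\) is close to \(N\). To handle this I will replace the indicator \(\psi\) by a smooth profile \(\psi_{j}=N^{-1/2}w(j/N)\). Then \(D_{k}\rho\) is a smooth bump whose Fourier transform decays rapidly beyond \(1/N\), and \(\|D_{k}\rho\|_{2}\) is bounded by \(N^{-1/2}\) times a function of \(k/N\) with finite \(L^{2}\) norm. That makes the total sum of order \(N^{-1}\cdot o(N)\to0\).
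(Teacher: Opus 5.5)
Your construction is valid and gives a proof genuinely different from the paper's. With \(\psi_j=N^{-1/2}\) for \(0\le j<N\) and \(\phi_j=(-1)^j\psi_j\), you have \(|\langle\psi,\phi\rangle|\le 1/N\), so \(\|\rho-\sigma\|_{\calB_{2}}^2\ge 2-2N^{-2}\). Moreover \(D_k(\rho-\sigma)=2D_k\rho\) for odd \(k\) and \(0\) for even \(k\).

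The estimate you call the main obstacle is a one-liner, and your sketch of it goes astray. For every \(k\) you have \(\|D_k\rho\|_2^2=(N-|k|)/N^2\le 1/N\). Also \(\sup_t|d_a(k,t)|=\sqrt{2\pi}\sup_{0\le s\le 4|g|}|J_k(s)|\le\sqrt{2\pi}\min\bigl(1,(2|g|)^{|k|}/|k|!\bigr)\). So, in your normalization,
\[
\|T\rho-T\sigma\|_2^2=4\sum_{k\ \mathrm{odd}}\int_{-\pi}^{\pi}|d_a(k,t)|^2\,|\calF^*(D_k\rho)(t)|^2\,\dd t
\le\frac{4}{N}\sum_{k\in\setZ}\sup_t|d_a(k,t)|^2
\le\frac{8\pi}{N}\sum_{k\in\setZ}\min\Bigl(1,\frac{(2|g|)^{2|k|}}{(|k|!)^2}\Bigr),
\]
hence \(\|T\rho-T\sigma\|_2=O(N^{-1/2})\). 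You need no Dirichlet-kernel analysis, no smooth profile, and no zero of \(d_a(k,\cdot)\) at \(t=0\); that zero only improves the rate.

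Your difficulty with \(|k|>N/2\) comes from summing \(\|D_k\rho\|_2^2\) without the multiplier. In that range \(\sup_t|d_a(k,t)|\) is super-exponentially small in \(N\). Your own split at \(|t|=N^{-1/2}\) would in fact already cover every \(k\). This is because \(|\calF^*(D_k\rho)(t)|^2=\sin^2((N-|k|)t/2)/(2\pi N^2\sin^2(t/2))\lesssim (Nt)^{-2}\) uniformly in \(k\): the decay is \(1/(N^2t^2)\), not \(1/(Nt^2)\).

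The smoothing paragraph, ending in ``\(N^{-1}\cdot o(N)\)'', is not an argument as written and should be dropped. Likewise, your earlier claim \(\|d_a(k,\cdot)\calF^*D_k\rho\|_2\lesssim N^{-|k|}\|D_k\rho\|_2\) is false because of the Dirichlet tails, though your final plan does not rely on it.

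The paper instead takes \(\rho=\frac12(e_0e_0^*+e_ke_k^*)\) and \(\sigma=\frac12(e_0+e_k)(e_0+e_k)^*\). Then \(\rho-\sigma\) is a single entry on each of the \(\pm k\)-th off-diagonals, and \(\|T\rho-T\sigma\|_2^2\le\pi\sup_t|d_a(k,t)|^2\to0\) super-exponentially as \(k\to\infty\). That argument uses only the uniform decay of \(d_a(k,\cdot)\) in \(k\). It is fully explicit, the states live on the span of \(e_0\) and \(e_k\), and the example is reused in the numerical section.

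Your example lets \(N\to\infty\) and spreads \(\rho-\sigma\) over all odd off-diagonals up to order \(N\), each of \(\ell^2\)-norm at most \(2N^{-1/2}\). It rests on the bound \(\|T\eta\|_2^2\le C_g\sup_k\|D_k\eta\|_2^2\), with \(C_g=\sum_k\sup_t|d_a(k,t)|^2<\infty\), set against \(\|\eta\|_{\calB_{2}}^2=\sum_k\|D_k\eta\|_2^2\). So the instability comes from an \(\ell^\infty\)-versus-\(\ell^2\) mismatch across off-diagonals, rather than from mass placed on one far off-diagonal. You get only polynomial decay of the data misfit in \(N\), instead of super-exponential decay in \(k\). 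In exchange, your example works with two pure states and shows that the phenomenon does not require concentrating the difference on a single high-order off-diagonal.
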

\begin{proof}
    Let \(0<\epsilon,\delta<\frac{1}{\sqrt{2}}\). Choose \(k\) such that \[\sqrt{\pi}\sup_{t\in[-\pi,\pi)}|d_{a}(k,t)|=\sqrt{2}\pi\sup_{s\in[0,4|g|]}|J_{k}(s)|<\delta\] and define \(\rho,\sigma\) by
    \begin{align*}
        \rho_{n,m}=\begin{cases}
            \frac{1}{2} \quad n=m=0\\
            \frac{1}{2} \quad n=m=k\\
            0 \quad\text{else}
        \end{cases}\quad\sigma_{n,m}=\begin{cases}
            \frac{1}{2} \quad n=0 \text{ and }(m=0 \text{ or }m=k)\\
            \frac{1}{2} \quad n=k \text{ and }(m=0 \text{ or }m=k)\\
            0 \quad\text{else.}
        \end{cases}\quad
    \end{align*}
    Then
    \[\|\rho-\sigma\|_{\calB_{2}}=\sqrt{2\frac{1}{2^2}}=\frac{1}{\sqrt{2}}> \epsilon\]
    and
    \begin{align*}
        \|T\rho-T\sigma\|_{2}^{2}&=\|T(\sigma-\rho)\|_{2}^{2}=\int_{-\pi}^{\pi}|d_{a}(-k,t)\frac{1}{2}e^{i\cdot0\cdot t}|^{2}\dd t+\int_{-\pi}^{\pi}|d_{a}(k,t)\frac{1}{2}e^{ikt}|^{2}\dd t\\
        &=\frac{1}{2}\int_{-\pi}^{\pi}|d_{a}(k,t)|^{2}\dd t\leq \pi\left(\sup_{t\in[-\pi,\pi)}|d_{a}(k,t)|\right)^{2}<\delta^{2}.
    \end{align*}
\end{proof}
This theorem shows that without decay conditions on the norms of the off-diagonals one cannot hope to get any stability estimates. This combination of well-posedness and missing uniform continuity of the inverse is highly unusual and provides a practical example for an inverse problem where well-posedness does not imply uniform convergence rates for any regularization method. This leads to the following corollary which is usually formulated for ill-posed inverse problems \cite[Proposition~3.11]{Engl:96} but still holds for our well-posed problem.
\begin{corollary}
    Let \(R_{\delta}:L^{2}([-\pi,\pi]\times\setZ)\rightarrow \BopII\) be any regularization method for problem \eqref{al:problem_cont}. Even though this problem is well-posed there can be no function \(\varphi:[0,\infty)\rightarrow[0,\infty)\) with \(\lim_{\delta\rightarrow 0}\varphi(\delta)=0\) such that
    \[E(\rho^{\dagger},\delta):=\sup\{\|R_{\delta}(y^{\delta})-\rho^{\dagger}\|_{2}:y^{\delta}\in T\BopIpsdtr,\,\|y^{\delta}-T\rho^{\dagger}\|_{2}\leq \delta\}\leq\varphi(\delta)\quad\forall \delta>0,\forall\rho^{\dagger}\in \BopIpsdtr.\]
\end{corollary}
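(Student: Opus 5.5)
We argue by contradiction, combining the pair construction from the proof of \Cref{the:not_uniform_cont} with the standard triangle-inequality argument from \cite[Proposition~3.11]{Engl:96}. Suppose such a \(\varphi\) exists with \(\varphi(\delta)\to 0\). Take \(\epsilon=\frac{1}{\sqrt{2}}\), the separation achieved there. Choose \(\delta_0>0\) so small that \(\varphi(\delta)<\frac{1}{2\sqrt{2}}\) for all \(\delta\le\delta_0\); this uses only \(\lim_{\delta\to0}\varphi(\delta)=0\), and no monotonicity of \(\varphi\) is needed since we evaluate at a single \(\delta\).

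\Cref{the:not_uniform_cont}, or directly its construction with \(k\) large enough that \(\sqrt{\pi}\sup_t|d_a(k,t)|<\delta\), gives \(\rho,\sigma\in\BopIpsdtr\) with \(\|T\rho-T\sigma\|_2<\delta\) and \(\|\rho-\sigma\|_{\BopII}=\frac{1}{\sqrt2}\). Set \(y:=T\sigma\in T\BopIpsdtr\). Then \(\|y-T\sigma\|_2=0\le\delta\) and \(\|y-T\rho\|_2<\delta\), so \(y\) is an admissible noisy datum for both \(\rho^\dagger=\rho\) and \(\rho^\dagger=\sigma\) in the definition of \(E(\cdot,\delta)\). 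The assumed bound therefore gives
\[\|\rho-\sigma\|_{\BopII}\le\|R_\delta(y)-\rho\|_{\BopII}+\|R_\delta(y)-\sigma\|_{\BopII}\le E(\rho,\delta)+E(\sigma,\delta)\le2\varphi(\delta)<\frac{1}{\sqrt2},\]
which contradicts \(\|\rho-\sigma\|_{\BopII}=\frac{1}{\sqrt2}\). Hence no such \(\varphi\) exists. The norm on the left of \(E\) is the \(\BopII\) norm, written \(\|\cdot\|_2\) in the statement. The argument uses nothing about \(R_\delta\) beyond its being a map, so it holds for any regularization method, and indeed for any estimator.

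The only point requiring care is the quantifier structure. The bound in the statement is uniform over all \(\rho^\dagger\in\BopIpsdtr\), and that uniformity is exactly what lets us apply it to the two \(\delta\)-dependent states \(\rho\) and \(\sigma\) produced by \Cref{the:not_uniform_cont}. Once this is made explicit, the step is routine, and I expect no substantive obstacle; the work was already done in \Cref{the:not_uniform_cont}.
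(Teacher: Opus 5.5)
Your proof is correct and essentially identical to the paper's: it uses the same pair \(\rho,\sigma\) from the proof of \Cref{the:not_uniform_cont}, the same admissible datum \(y^{\delta}=T\sigma\), and the triangle inequality. The only difference is cosmetic. The paper splits into cases according to whether \(\|R_{\delta}(T\sigma)-\sigma\|_{2}\geq\frac{1}{2\sqrt{2}}\), while you fold both cases into the single estimate \(\|\rho-\sigma\|_{\BopII}\leq E(\rho,\delta)+E(\sigma,\delta)\leq 2\varphi(\delta)\).
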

\begin{proof}
Let \(\varphi:[0,\infty)\rightarrow[0,\infty)\) with \(\lim_{\delta\rightarrow 0}\varphi(\delta)=0\) then there exists \(\delta>0\) such that \(\varphi(\delta)<\frac{1}{2\sqrt{2}}\). By the proof of the previous theorem there exist \(\rho,\sigma\in\BopIpsdtr\) such that \(\|\rho-\sigma\|=\frac{1}{\sqrt{2}}\) and \(\|T\rho-T\sigma\|_{2}<\delta\). 
Now we derive lower bounds on \(E\) by considering the case \(y^{\delta}=T\sigma\). If \(\|R_{\delta}(T\sigma)-\sigma\|_{2}\geq \frac{1}{2\sqrt{2}}>\varphi(\delta)\) we immediately get that \(E(\sigma,\delta)\) cannot be bounded by \(\varphi(\delta)\). So we may assume that \(\|R_{\delta}(T\sigma)-\sigma\|_{2}\leq \frac{1}{2\sqrt{2}}\). Then we get by the triangle inequality
\begin{align*}
    E(\rho,\delta)\geq\|R_{\delta}(T\sigma)-\rho\|_{2}\geq-\|R_{\delta}(T\sigma)-\sigma\|_{2}+\|\sigma-\rho\|_{2}\geq \frac{1}{\sqrt{2}}-\frac{1}{2\sqrt{2}}=\frac{1}{2\sqrt{2}}>\varphi(\delta).
\end{align*}
\end{proof}

A remaining question is if the rates derived with decay conditions in \cite{Shi:20} are optimal. For this a more precise argument is necessary where we construct convergent sequences \((\rho^{(n)})_{n\in\setN}\subset \BopIpsdtr\) such that \(\rho^{(n)}\rightarrow\rho\in\BopIpsdtr\) much slower than \(T\rho^{(n)}\rightarrow T\rho\). One of the challenges in these constructions lies in ensuring that all sequence elements are valid density operators. To ensure positive semi-definiteness we use the following lemma which is a straightforward extension of the Gershgorin circle theorem \cite[Theorem 1.1]{Varga:11}.
\begin{lemma}\label{lem:gershgorin}
    Let \(\rho\in\calB_{2}\) be hermitian. If for all \(j\in\setZ\)
    \[\rho_{jj}\geq \sum_{k\in \setZ,k\neq j}|\rho_{jk}|,\]
    then \(\rho\) is positive semidefinite.
\end{lemma}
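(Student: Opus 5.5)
We need to show $\langle x,\rho x\rangle\ge 0$ for every $x\in\ell^2(\setZ)$. The plan is to use the elementary inequality $2|x_j||x_k|\le |x_j|^2+|x_k|^2$ to bound the off-diagonal contributions of the quadratic form by the diagonal ones, rather than working with eigenvalues as in the classical Gershgorin theorem. First I would justify that the quadratic form is well defined and given by an absolutely convergent double series. The hypothesis gives $\sum_{k\neq j}|\rho_{jk}|\le\rho_{jj}$. Moreover, $\rho$ is Hilbert--Schmidt, so $\rho_{jj}\to 0$ and in particular $\sup_j\rho_{jj}<\infty$. Hence for $x$ with finite support, $\langle x,\rho x\rangle=\sum_{j,k}\overline{x_j}\rho_{jk}x_k$ is a finite sum. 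Since $\rho$ is bounded, both sides are continuous in $x$, and finitely supported vectors are dense in $\ell^2(\setZ)$. It therefore suffices to prove nonnegativity for finitely supported $x$, which removes all convergence issues.

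For finitely supported $x$, Hermiticity of $\rho$ makes the form real, so we may write
\begin{align*}
\langle x,\rho x\rangle=\sum_j\rho_{jj}|x_j|^2+\sum_{j\neq k}\overline{x_j}\rho_{jk}x_k .
\end{align*}
I would bound the modulus of the second sum by $\sum_{j\neq k}|\rho_{jk}||x_j||x_k|\le\frac12\sum_{j\neq k}|\rho_{jk}|\left(|x_j|^2+|x_k|^2\right)$. The Hermitian symmetry $|\rho_{jk}|=|\rho_{kj}|$ lets us swap the roles of $j$ and $k$ in the $|x_k|^2$ term. This turns the bound into $\sum_j|x_j|^2\sum_{k\neq j}|\rho_{jk}|$. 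Using the hypothesis row by row, this is at most $\sum_j\rho_{jj}|x_j|^2$, so $\langle x,\rho x\rangle\ge 0$.

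The only point needing care is the density/continuity step. The row sums $\sum_{k\neq j}|\rho_{jk}|$ could a priori be infinite for a general Hilbert--Schmidt operator, but here they are finite by assumption. We also never rearrange infinite sums, because we restrict to finitely supported $x$ and then pass to the limit using the boundedness of $\rho$ on $\ell^2(\setZ)$, which holds since $\calB_2$ operators are bounded.
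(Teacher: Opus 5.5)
Your proof is correct, but it takes a different route from the paper. The paper gives no written proof; it calls the lemma a straightforward extension of the Gershgorin circle theorem, i.e.\ an eigenvalue-localization argument. In that route you would use that a Hilbert--Schmidt operator is compact and self-adjoint, so by the spectral theorem it suffices to show every eigenvalue $\lambda$ is nonnegative. For an eigenvector $x\in\ell^2(\setZ)$, a coordinate $j$ with $|x_j|=\max_k|x_k|$ exists because $x_k\to 0$. The $j$-th row of $\rho x=\lambda x$ then gives $|\lambda-\rho_{jj}|\le\sum_{k\neq j}|\rho_{jk}|$, hence $\lambda\ge 0$.

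You instead prove nonnegativity of the quadratic form directly. You apply $2|x_j||x_k|\le|x_j|^2+|x_k|^2$ on finitely supported vectors, use $|\rho_{jk}|=|\rho_{kj}|$ to turn column sums into row sums, and extend by density and continuity. Your approach needs only boundedness of $\rho$: no compactness, no spectral theorem, and no maximizing coordinate. It therefore works verbatim for any bounded Hermitian operator with this diagonal dominance. The Gershgorin route gives slightly more, namely that each eigenvalue lies in some interval $[\rho_{jj}-r_j,\rho_{jj}+r_j]$ with $r_j=\sum_{k\neq j}|\rho_{jk}|$.

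One small inaccuracy: if you sum the AM--GM bound over all pairs $j\neq k$ rather than only pairs in the support of $x$, you do rearrange an infinite series. This is harmless because all terms are nonnegative. Alternatively, restrict to pairs in the support and enlarge the inner sum at the end. Your observation that $\rho_{jj}\to 0$ is true but not needed.
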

Now we construct sequences of operators by keeping their diagonal fixed and choosing the off-diagonal entries as Fourier coefficients of hat functions.

For \(n>0\) we define the hat function \(h^{(n)}\in L^{2}((-\pi,\pi])\) by
\begin{align*}
    h^{(n)}(x)=\begin{cases}1-n|x|\quad \text{ for } |x|\leq\frac{1}{n}\\0\quad\text{else.}\end{cases}
\end{align*}
Its Fourier series is given by
\begin{align*}
    \hat{h}^{(n)}(l)&=\frac{1}{\sqrt{2\pi}}\int_{-\frac{1}{n}}^{\frac{1}{n}}(1-n|x|)e^{-ilx}\dd x=\frac{1}{\sqrt{2\pi}}\int_{-\frac{1}{n}}^{\frac{1}{n}}(1-n|x|)\cos(lx)=\frac{2}{\sqrt{2\pi}}\int_{0}^{\frac{1}{n}}(1-nx)\cos(lx)\\
    &=\begin{cases}\frac{2n}{\sqrt{2\pi}}\frac{1-\cos(\frac{l}{n})}{l^{2}}
    \quad l\in\setZ\setminus\{0\}\\
    \frac{1}{\sqrt{2\pi}}\frac{1}{n}\quad l=0.\end{cases}
\end{align*}
Now we define for \(k\in\setN,k>0\) and \(0<\epsilon<1\) the operator \(\eta(n,k,\epsilon)\) by
\begin{align*}
    \eta(n,k,\epsilon)_{l+k,l}=\eta(n,k,\epsilon)_{l,l+k}=\frac{1}{2}(2|k|)^{-1-\epsilon}n^{-\epsilon}\hat{h}^{(n)}(l).
\end{align*}
This operator is symmetric and only contains values on two off-diagonals which are given by scaled versions of \(\hat{h}^{(n)}\). 

We now construct a sequence for the diagonal to get a positive semidefinite operator. For this we define the diagonal operator
\begin{align*}
    \tau(\epsilon)_{l,l}:=\begin{cases}\frac{4}{\sqrt{2\pi}}|l|^{-(1+\epsilon)}
    \quad l\in\setZ\setminus\{0\}\\
    \frac{4}{\sqrt{2\pi}}\quad l=0.\end{cases}
\end{align*}
Now we combine them and define
\begin{align*}
    \sigma(n,k,\epsilon)&:=\frac{1}{\tr\tau(\epsilon)}(\tau(\epsilon)+\eta(n,k,\epsilon))\\
    \rho(\epsilon)&:=\frac{1}{\tr\tau(\epsilon)}\tau(\epsilon).
\end{align*}
Their properties are summarized in the following lemma.
\begin{lemma}\label{lem:seq_prop}
    For \(k\in\setN\) and \(n,\epsilon>0\) the operators \(\sigma(n,k,\epsilon),\rho(\epsilon)\) are density operators. Furthermore, the following bounds hold 
    \begin{align*}
        \|\sigma(n,k,\epsilon)-\rho(\epsilon)\|_{\calB_{2}}&=\frac{1}{\tr\tau(\epsilon)}\frac{1}{\sqrt{3}}(2|k|)^{-1-\epsilon}n^{-\frac{1}{2}-\epsilon},\\
        \|T\sigma(n,k,\epsilon)-T\rho(\epsilon)\|_{2}&\leq\frac{1}{\tr\tau(\epsilon)}2\sqrt{\pi}\frac{(2k)^{-1-\epsilon}|g|^{k}}{k!\sqrt{(2k+1)(2k+2)(2k+3)}}n^{-k-\frac{1}{2}-\epsilon},\\
        \sum_{l\in\setZ}|\sigma(n,k,\epsilon)_{l+k,l}|&=\frac{1}{\tr\tau(\epsilon)}\frac{1}{2}(2|k|)^{-1-\epsilon}n^{-\epsilon}.
    \end{align*}
\end{lemma}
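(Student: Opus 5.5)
First, check that $\sigma(n,k,\epsilon)$ and $\rho(\epsilon)$ are density operators. For $\rho(\epsilon)$ this is immediate: $\tau(\epsilon)$ is diagonal with positive entries, and summability of $|l|^{-1-\epsilon}$ gives a finite trace. For $\sigma$, the diagonal of $\eta$ vanishes (as $k>0$), so $\tr\sigma=1$. Hermiticity follows because $\hat h^{(n)}$ is real and $\eta$ is symmetric by construction. Positive semidefiniteness will come from \Cref{lem:gershgorin}. Row $j$ of $\eta$ has at most two nonzero entries, $\eta_{j,j+k}$ and $\eta_{j,j-k}$, each bounded by $\frac12(2k)^{-1-\epsilon}n^{-\epsilon}\frac{2n}{\sqrt{2\pi}}\min(\frac{1}{2n^2},\frac{2}{l^2})$, using $1-\cos x\le\min(x^2/2,2)$ with $l=j$ or $l=j-k$. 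For $|j|\le 2k$ the bound $\hat h^{(n)}(l)\le \frac{1}{\sqrt{2\pi}n}$ combined with $n^{-\epsilon}\le1$ gives off-diagonal sum at most $\frac{1}{\sqrt{2\pi}}(2k)^{-1-\epsilon}\le\tau_{jj}$. For $|j|>2k$, both $|j|$ and $|j-k|$ are at least $|j|/2$, so $\hat h^{(n)}(l)\le\frac{4n}{\sqrt{2\pi}}\cdot\frac{4}{j^2}$. The prefactor $(2k)^{-1-\epsilon}n^{-\epsilon}$ must then be used to absorb the factor $n$ and compare with $\frac{4}{\sqrt{2\pi}}|j|^{-1-\epsilon}$. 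This is where I expect the bookkeeping to be delicate. The fallback is the bound $\hat h^{(n)}(l)\le \frac{1}{\sqrt{2\pi}n}$, valid for all $l$, combined with $|j|^{-1-\epsilon}$ versus $n^{-1-\epsilon}$ when $|j|\le n$, and the $l^{-2}$ tail for $|j|>n$. Splitting the range of $j$ at $\max(2k,n)$ and estimating each case separately should close the argument.

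For the Hilbert–Schmidt distance, $\sigma-\rho=\eta/\tr\tau$. By Parseval,
\[\|\eta\|_{\calB_2}^2=2\cdot\tfrac14(2k)^{-2-2\epsilon}n^{-2\epsilon}\|h^{(n)}\|_{L^2}^2,\]
and $\|h^{(n)}\|^2_{L^2}=\frac{2}{3n}$. Taking square roots gives exactly $\frac{1}{\sqrt3}(2k)^{-1-\epsilon}n^{-1/2-\epsilon}$. Similarly, since $\hat h^{(n)}\ge0$, the $\ell^1$ sum of the off-diagonal is $\frac12(2k)^{-1-\epsilon}n^{-\epsilon}\sum_l\hat h^{(n)}(l)$. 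The sum equals $\sqrt{2\pi}\,\calF^*\hat h^{(n)}(0)\cdot\frac{1}{\sqrt{2\pi}}\cdot\sqrt{2\pi}$, i.e. $\sqrt{2\pi}\,h^{(n)}(0)/\sqrt{2\pi}$ times the normalisation; tracking the $\frac{1}{\sqrt{2\pi}}$ factors this should equal $1$, yielding the third identity.

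For the data misfit, use \Cref{the:decomp_cont}. Only the $\pm k$ off-diagonals contribute, so
\[\|T\eta\|_2^2=\sum_{\pm}\int_{-\pi}^{\pi}|d_a(\pm k,t)|^2|\calF^*(D_{\pm k}\eta)(t)|^2\dd t.\]
Here $\calF^*(D_{\pm k}\eta)$ is, up to modulation, $\frac12(2k)^{-1-\epsilon}n^{-\epsilon}h^{(n)}$, which is supported in $|t|\le 1/n$. Using the remark, $|d_a(k,t)|=\sqrt{2\pi}|J_k(4|g|\sin\frac t2)|\le\sqrt{2\pi}\frac{(|g||t|)^k}{k!}$, via $|J_k(x)|\le\frac{(|x|/2)^k}{k!}$ and $|\sin\frac t2|\le\frac{|t|}2$. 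It then remains to compute
\[\int_{-1/n}^{1/n}t^{2k}(1-n|t|)^2\dd t=\frac{4\,n^{-2k-1}}{(2k+1)(2k+2)(2k+3)},\]
a Beta integral. Collecting constants gives the stated bound with $n^{-k-1/2-\epsilon}$ and the factor $|g|^k/k!$. The main obstacle is the Gershgorin verification in the first step. The remaining steps are explicit computations where only constants need care.
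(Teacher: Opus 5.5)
Your handling of trace and hermiticity, of the Hilbert--Schmidt distance via Parseval, and of the bound on $\|T\sigma-T\rho\|_2$ via \Cref{the:decomp_cont}, the estimate $|J_k(x)|\le(|x|/2)^k/k!$ and the Beta integral is the same as the paper's proof, and your constants there are right. The gap is the step you flag yourself: positive semidefiniteness is only sketched, and the sketch does not close with the estimates you record.

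Write $\eta=\eta(n,k,\epsilon)$ and $\tau=\tau(\epsilon)$. Row $j$ of $\eta$ has the entries $\frac12(2k)^{-1-\epsilon}n^{-\epsilon}\hat h^{(n)}(j)$ and $\frac12(2k)^{-1-\epsilon}n^{-\epsilon}\hat h^{(n)}(j-k)$. In the regime $|j|>\max(2k,n)$ you use $\hat h^{(n)}(l)\le \frac{4n}{\sqrt{2\pi}\,l^{2}}$ and $|l|\ge|j|/2$. These give a row sum of at most $\frac{16}{\sqrt{2\pi}}(2k)^{-1-\epsilon}n^{1-\epsilon}j^{-2}$. Comparing with $\tau_{j,j}=\frac{4}{\sqrt{2\pi}}|j|^{-1-\epsilon}$ then requires $4(2k)^{-1-\epsilon}\le(|j|/n)^{1-\epsilon}$. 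Since $|j|/n$ can be arbitrarily close to $1$, this sufficient condition is false for $k=1$ and every $\epsilon<1$. Sharper constants would rescue it (for $k=1$, $j\ge 3$ one has $|j-1|\ge\frac23|j|$), but as written the claim that the case split ``should close the argument'' is not justified.

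The missing idea is the paper's pointwise comparison. It is uniform in $n$ and makes any case split unnecessary: for $l\neq 0$,
\[
\frac{n^{-\epsilon}\hat h^{(n)}(l)}{\tau_{l,l}}=\frac12\,\frac{1-\cos(l/n)}{(l/n)^{1-\epsilon}}\le\frac12\sup_{t>0}\frac{1-\cos t}{t^{1-\epsilon}}\le 1,
\]
by $1-\cos t\le\min(t^2/2,2)$ and $\epsilon<1$. The case $l=0$ is trivial for $n\ge 1$. Hence the row-$l$ sum is at most $\frac12(2k)^{-1-\epsilon}(\tau_{l,l}+\tau_{l-k,l-k})$. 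It remains to note $(2k)^{-1-\epsilon}\tau_{l-k,l-k}\le\tau_{l,l}$, which for $l\notin\{0,k\}$ follows from $|l|\le|l-k|+k\le 2k|l-k|$ and is immediate for $l\in\{0,k\}$. So $n^{-\epsilon}$ absorbs the size of $\hat h^{(n)}$ at scale $|l|\sim n$, and $(2k)^{-1-\epsilon}$ pays for the shift by $k$. The restriction $\epsilon<1$ from the definition of $\eta$ is essential here: for $\epsilon>1$ the ratio above is unbounded in $l$.

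Finally, your treatment of the third identity is not a computation, and the factors do not cancel. With the unitary normalisation, $h^{(n)}(0)=\frac{1}{\sqrt{2\pi}}\sum_l\hat h^{(n)}(l)$, so $\sum_l\hat h^{(n)}(l)=\sqrt{2\pi}$ and the identity holds only up to a factor $\sqrt{2\pi}$. The paper's proof contains the same slip: it writes $\sum_l\hat h^{(n)}(l)=h^{(n)}(0)$. This is harmless for the later theorems, which use only the orders in $n$ and $k$, but you should compute the sum rather than assert that it ``should equal $1$''.
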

The proof which mainly consists of simple calculations can be found in \Cref{sec:proof_seq}.

With these bounds we can now show instability estimates. To simplify the arguments we make use of Landau notation and write \(f\in\Omega(h)\) for \(h\in\calO(f)\).

We start with the band-limited case where we show that the Hölder exponent from \cite{Shi:20} cannot be improved.
\begin{theorem}\label{the:bandlimited}
    For \(k\in\setN\), \(\mu>\frac{1}{1+2k}\) and every \(C>0\) there exist \(\rho,\sigma\in\calT\) with \(\rho,\sigma\) band-limited by \(k\) such that
    \[\|\sigma-\rho\|_{\calB_2}\geq C\|T\sigma-T\rho\|_{2}^{\mu}\]
\end{theorem}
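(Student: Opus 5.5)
We take \(\rho=\rho(\epsilon)\) and \(\sigma=\sigma(n,k,\epsilon)\) from \Cref{lem:seq_prop}, with \(\epsilon>0\) small and to be fixed, and let \(n\to\infty\). By \Cref{lem:seq_prop} both are density operators, so they lie in \(\calT\) (read as the density operators, possibly with the band-limitation condition). They are band-limited by \(k\): \(\tau(\epsilon)\) is diagonal and \(\eta(n,k,\epsilon)\) is supported only on the \(\pm k\)-th off-diagonals. Hence only the choice of \(\epsilon\) and the asymptotics in \(n\) remain.

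From \Cref{lem:seq_prop}, with constants \(c_1,c_2>0\) depending on \(k,\epsilon,g\) but not on \(n\),
\[\|\sigma-\rho\|_{\calB_2}=c_1 n^{-\frac12-\epsilon},\qquad \|T\sigma-T\rho\|_2\leq c_2 n^{-k-\frac12-\epsilon}.\]
Therefore
\[\frac{\|\sigma-\rho\|_{\calB_2}}{\|T\sigma-T\rho\|_2^{\mu}}\geq \frac{c_1}{c_2^{\mu}}\,n^{-\frac12-\epsilon+\mu(k+\frac12+\epsilon)}.\]
The quotient is well defined because \(T\) is injective (\cite{Shi:20}) and \(\sigma\neq\rho\), so the denominator is nonzero.

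The exponent is \(\mu(k+\tfrac12+\epsilon)-\tfrac12-\epsilon\). It is positive exactly when
\[\mu>\frac{\frac12+\epsilon}{k+\frac12+\epsilon}=\frac{1+2\epsilon}{2k+1+2\epsilon}.\]
Since \(\mu>\frac{1}{1+2k}\) strictly and the right-hand side tends to \(\frac{1}{2k+1}\) continuously as \(\epsilon\to0\), we can fix \(\epsilon\in(0,1)\) small enough that this inequality holds. The exponent is then positive, the quotient tends to infinity as \(n\to\infty\), and for any \(C>0\) some \(n\) gives the claimed inequality.

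The only delicate point is the bound on \(\|T\sigma-T\rho\|_2\) in \Cref{lem:seq_prop}, which we may assume here. It encodes that \(d_a(k,t)\) vanishes to order \(k\) at \(t=0\), since \(J_k(x)\sim x^k\), while the hat functions concentrate \(\calF^*(D_k\eta)\) near \(t=0\) on a set of width \(1/n\). Everything else is comparing exponents. If the strict \(\epsilon>0\) were not available, the argument would degenerate to the borderline exponent \(\frac{1}{2k+1}\), which is why the theorem requires strict inequality for \(\mu\).
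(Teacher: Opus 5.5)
Your proposal is correct and is essentially the paper's own proof. You use the same pair \(\rho(\epsilon)\), \(\sigma(n,k,\epsilon)\) from Lemma~\ref{lem:seq_prop}, the same choice of \(\epsilon\) with \(\frac{1+2\epsilon}{1+2k+2\epsilon}<\mu\), and the same comparison of \(n^{-\frac12-\epsilon}\) against \(n^{-\mu(k+\frac12+\epsilon)}\) as \(n\to\infty\). (Appealing to the injectivity of \(T\) to make the quotient well defined does no harm but is not needed: you can compare \(c_1 n^{-\frac12-\epsilon}\) with \(C c_2^{\mu} n^{-\mu(k+\frac12+\epsilon)}\) directly.)
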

\begin{proof}
    For \(k\) and \(\epsilon>0\) fixed we have
\begin{align*}
    \|\sigma(n,k,\epsilon)-\rho(\epsilon)\|_{\calB_{2}}&\in \Omega(n^{-\frac{1}{2}-\epsilon}),\\
    \|T\sigma(n,k,\epsilon)-T\rho(\epsilon)\|_{2}^{\mu}&\in \calO(n^{\mu(-k-\frac{1}{2}-\epsilon)}).
\end{align*}
Now we choose \(\epsilon\) such that \(\frac{1+2\epsilon}{1+2k+2\epsilon}<\mu\) and get
\begin{align*}
    \mu\left(k+\frac{1}{2}+\epsilon\right)=\frac{1}{2}\mu(1+2k+2\epsilon)>\frac{1}{2}(1+2\epsilon).
\end{align*}
This implies \(\|T\sigma(n,k,\epsilon)-T\rho(\epsilon)\|_{2}^{\mu}\in o(n^{-\frac{1}{2}-\epsilon})\) and the claimed inequality  holds for all \(n\) large enough.
\end{proof}

For exponential decay we get the following lower bound.

\begin{theorem}\label{the:exponential}
    For \(0<b<1\) and every \(C>0\) there exist \(\rho,\sigma\in\calT\) with \(\rho,\sigma\) having exponentially decaying off-diagonals, meaning
    \begin{align*}
        \sum_{l\in\setZ}|\rho_{l,l+k}|\leq \tilde{C}b^{|k|},\quad\sum_{l\in\setZ}|\sigma_{l,l+k}|\leq \tilde{C}b^{|k|}
    \end{align*}
    such that
    \[\|\sigma-\rho\|_{\calB_{2}}\geq C\Phi(\|T\sigma-T\rho\|_{2})^{-\alpha}\text{ with }\quad \Phi(t):=e^{\sqrt{(-\ln t)(-\ln b)}}\]
    for \(\alpha>\sqrt{2}\).
\end{theorem}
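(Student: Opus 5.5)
We take \(\rho=\rho(\epsilon)\) and \(\sigma=\sigma(n,k,\epsilon)\) from \Cref{lem:seq_prop}, but now let the off-diagonal index \(k\) grow together with \(n\). The parameter \(\epsilon>0\) stays fixed and small.

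\textbf{Decay condition.} The operator \(\rho(\epsilon)\) is diagonal, so its off-diagonal sums vanish for \(k\neq 0\). The diagonal sum equals \(1\), which is at most \(\tilde C\) as soon as \(\tilde C\geq 1\). The operator \(\sigma(n,k,\epsilon)\) has nonzero entries only on the diagonal and on the \(\pm k\) off-diagonals. By \Cref{lem:seq_prop}, the sum over the \(k\)-th off-diagonal is \(\frac{1}{\tr\tau(\epsilon)}\frac12(2k)^{-1-\epsilon}n^{-\epsilon}\). The exponential decay condition is therefore satisfied if
\[(2k)^{-1-\epsilon}n^{-\epsilon}\lesssim b^{k}.\]
This holds with a uniform \(\tilde C\) if we couple the parameters by \(n^{\epsilon}\approx b^{-k}\), i.e.
\[n=n(k)=\lceil b^{-k/\epsilon}\rceil.\]
The polynomial factor \((2k)^{-1-\epsilon}\leq 1\) only helps.

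\textbf{Asymptotics of both sides.} With \(L:=-\ln n\approx k\ln b/\epsilon\), the estimates of \Cref{lem:seq_prop} give
\[\|\sigma-\rho\|_{\calB_2}\gtrsim k^{-1-\epsilon}n^{-1/2-\epsilon},\qquad t:=\|T\sigma-T\rho\|_2\lesssim \frac{|g|^k}{k!\,k^{5/2}}\,n^{-k-1/2-\epsilon}.\]
Taking logarithms of the second bound gives
\[-\ln t\geq k\ln n+\ln k!-k\ln|g|+O(\ln n)=k\ln n\,(1+o(1)).\]
Here we use that \(\ln k!\sim k\ln k\) and \(k\ln|g|\) are both negligible against \(k\ln n\), because \(\ln n\approx k(-\ln b)/\epsilon\) is of order \(k\). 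Hence
\[-\ln t\gtrsim \frac{k^2(-\ln b)}{\epsilon}.\]

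\textbf{Comparing with \(\Phi\).} Since \(\Phi\) is decreasing on \((0,1)\), the function \(\Phi(t)^{-\alpha}\) is increasing in \(t\), so an upper bound on \(t\) yields an upper bound on \(\Phi(t)^{-\alpha}\). This gives
\[\Phi(t)^{-\alpha}=\exp\bigl(-\alpha\sqrt{(-\ln t)(-\ln b)}\bigr)\leq \exp\Bigl(-\alpha\, k\,\frac{-\ln b}{\sqrt{\epsilon}}(1+o(1))\Bigr).\]
On the other side,
\[\|\sigma-\rho\|_{\calB_2}\gtrsim \exp\bigl(-(\tfrac12+\epsilon)\ln n-(1+\epsilon)\ln k\bigr)=\exp\Bigl(-\bigl(\tfrac12+\epsilon\bigr)\frac{k(-\ln b)}{\epsilon}(1+o(1))\Bigr).\]
The claimed inequality therefore holds for all large \(k\) as soon as
\[\frac{\alpha}{\sqrt{\epsilon}}>\frac{\frac12+\epsilon}{\epsilon},\quad\text{i.e.}\quad \alpha>\frac{\frac12+\epsilon}{\sqrt{\epsilon}}.\]
The right-hand side is minimized at \(\epsilon=\tfrac12\), where it equals \(\sqrt2\). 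This is exactly the threshold in the statement.

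\textbf{Main obstacle.} For \(\alpha>\sqrt2\) we should choose \(\epsilon\) near \(\tfrac12\) with \((\frac12+\epsilon)/\sqrt\epsilon<\alpha\); since \(\alpha>\sqrt2\) is strict, such an \(\epsilon\) exists. The delicate part is the bookkeeping that justifies the \(o(1)\) terms. The rounding in \(n(k)\), the factor \(k!\), the factor \(|g|^k\), and the polynomial factors must all be shown to perturb the exponents only by \(o(k)\). Because the comparison is strict, this slack absorbs them, and any constant \(C\) is eventually dominated by the exponential gap.
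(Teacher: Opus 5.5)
Your proposal is correct and follows essentially the same route as the paper. Both use the same pair \(\sigma(n,k,\epsilon),\rho(\epsilon)\), the same coupling \(n(k)=b^{-k/\epsilon}\) for the decay condition, and the same comparison of the exponents \(\frac{1}{\epsilon}(\frac12+\epsilon)k\) and \(\frac{\alpha}{\sqrt{\epsilon}}k\). The only difference is that you keep \(\epsilon\) free and minimize \((\frac12+\epsilon)/\sqrt{\epsilon}\), which explains why the paper's direct choice \(\epsilon=\frac12\) produces the threshold \(\sqrt2\); you can take \(\epsilon=\frac12\) exactly, so your opening remark that \(\epsilon\) is ``small'' should be dropped.
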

\begin{proof}
    For \(k>0\) we set \(n(k):=b^{-\frac{1}{\epsilon}k}\) to get
    \begin{align*}
        \sum_{l\in\setZ}|\sigma(n(k),k,\epsilon)_{l+k,l}|\in \calO(b^{k})
    \end{align*}
    by \Cref{lem:seq_prop}. Therefore, the off-diagonals decay with the desired rate. For the norms we get by the same lemma
    \begin{align*}
        \|\sigma(n(k),k,\epsilon)-\rho(\epsilon)\|_{\calB_{2}}&\in \Omega(b^{\frac{1}{\epsilon}k(\frac{1}{2}+\epsilon)\nu})\quad \forall \nu>1\\
    \|T\sigma(n(k),k,\epsilon)-T\rho(\epsilon)\|_{2}&\in o(b^{\frac{1}{\epsilon}k^{2}}).
    \end{align*}
    We now compute
    \begin{align*}
        \Phi(b^{\frac{1}{\epsilon}k^{2}})^{-\alpha}=e^{-\alpha\sqrt{(-\ln b)(-\ln b^{\frac{1}{\epsilon}k^{2}})}}=e^{-|\ln b|\alpha\frac{1}{\sqrt{\epsilon}}k}=b^{\alpha\frac{1}{\sqrt{\epsilon}}k}.
    \end{align*}
    Then for \(\epsilon=\frac{1}{2}\) and \(1<\nu<\frac{\alpha}{\sqrt{2}}\) we have
    \begin{align*}
        \frac{1}{\epsilon}(\frac{1}{2}+\epsilon)\nu=\sqrt{2}\sqrt{2}\nu<\sqrt{2}\alpha=\frac{1}{\sqrt{\epsilon}}\alpha.
    \end{align*}
    This implies
    \begin{align*}
        \Phi(\|T\sigma(n(k),k,\epsilon)-T\rho(\epsilon)\|_{2})^{-\alpha}\in o(b^{\frac{1}{\epsilon}k(\frac{1}{2}+\epsilon)\nu}),
    \end{align*}
    and for \(k\) large enough the desired inequality holds.
\end{proof}
This bound has the same form as the one in \cite{Shi:20} but there the constant \(\alpha\) is equal to \(1\).

For the final case of polynomial decay, the previous construction is not necessary as the factorial term in the bound of \(\|T \sigma(n, k, \epsilon) - T \rho(\epsilon)\|_{2}\) in \Cref{lem:seq_prop} dominates if \(n\) is chosen appropriately to get polynomial decay. We instead use the following simpler construction similar to the one in \Cref{the:not_uniform_cont}.  For \(\mu>0\) we define
\begin{align*}
    \tilde{\rho}(\mu)_{j,j}&=\begin{cases}(1-2^{-\mu}) \quad j=0\\
    (1-2^{-\mu})2^{-\mu l} \quad j=2^{l},l\in\setZ,l> 0\\
    0\quad \text{else}\end{cases}\\
    \tilde{\eta}(k,\mu)_{l,m}&=\begin{cases}(1-2^{-\mu})2^{-\mu k} \quad m=0,l=2^{k} \text{ or }m=2^{k},l=0\\
    0\quad \text{else}\end{cases}\\
    \tilde{\sigma}(k,\mu)&=\tilde{\eta}(k,\mu)+\tilde{\rho}(\mu).
\end{align*}
By \Cref{lem:gershgorin} we get \(\tilde{\sigma}(k,\mu)\in\BopIpsdtr\) and we can now show the following bound for polynomial decay.
\begin{theorem}\label{the:polynomial}
    For \(\alpha>\mu>0\) and every \(C>0\) there exist \(\rho,\sigma\in\BopIpsdtr\) with \(\rho,\sigma\) having polynomially decaying off-diagonals, meaning
    \begin{align*}
        \sum_{l\in\setZ}|\rho_{l,l+k}|\leq \tilde{C}|k|^{-\mu},\quad\sum_{l\in\setZ}|\sigma_{l,l+k}|\leq \tilde{C}|k|^{-\mu}
    \end{align*}
    such that
    \[\|\sigma-\rho\|_{\calB_{2}}\geq C\tilde{\Phi}(\|T\sigma-T\rho\|_{2})^{-\alpha}\text{ with }\quad\tilde{\Phi}(t):=\frac{-\ln t}{\ln(-\ln t)}.\]
\end{theorem}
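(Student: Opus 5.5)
I will use the pair \(\rho=\tilde{\rho}(\mu)\) and \(\sigma=\tilde{\sigma}(K,\mu)\) constructed just before the statement, and let \(K\to\infty\).

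\textbf{Step 1: membership and decay.} First I check that \(\tilde\rho(\mu)\in\BopIpsdtr\). Its trace is
\[(1-2^{-\mu})\sum_{l\ge0}2^{-\mu l}=1,\]
and it is diagonal with nonnegative entries. Next, \(\tilde\sigma(K,\mu)\) has the same diagonal. Its only additional entries are at positions \((0,2^K)\) and \((2^K,0)\), with modulus \((1-2^{-\mu})2^{-\mu K}\). Each of these is bounded by both corresponding diagonal entries, namely \(1-2^{-\mu}\) and \((1-2^{-\mu})2^{-\mu K}\). Hence \Cref{lem:gershgorin} applies and \(\tilde\sigma(K,\mu)\in\BopIpsdtr\).

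The decay condition is trivial for \(\rho\), whose off-diagonals vanish. For \(\sigma\), the only nonzero off-diagonals are those with index \(\pm 2^K\). Their \(\ell^1\)-norm is
\[(1-2^{-\mu})2^{-\mu K}=(1-2^{-\mu})\,|2^K|^{-\mu}.\]
So the polynomial decay holds with \(\tilde C=1\), uniformly in \(K\).

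\textbf{Step 2: the two distances.} The difference in \(\calB_2\) is
\[\|\sigma-\rho\|_{\calB_2}=\sqrt2\,(1-2^{-\mu})2^{-\mu K}.\]
For the data distance I use \Cref{the:decomp_cont}, exactly as in the proof of \Cref{the:not_uniform_cont}. The difference \(\sigma-\rho\) occupies only the off-diagonals \(\pm m\) with \(m=2^K\), each containing a single entry. Therefore
\[\|T\sigma-T\rho\|_2^2\le 2\,(1-2^{-\mu})^2\,2^{-2\mu K}\cdot 2\pi\sup_t|d_a(m,t)|^2 .\]
By the Remark, \(\sup_t|d_a(m,t)|\le\sqrt{2\pi}\sup_{s\in[0,4|g|]}|J_m(s)|\). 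The standard bound \(|J_m(s)|\le (s/2)^m/m!\) then gives
\[\|T\sigma-T\rho\|_2\le C_1\frac{(2|g|)^m}{m!}=:t_m.\]
By Stirling, \(-\ln t_m\ge m\ln m-c\,m\) for some constant \(c\), i.e.\ \(-\ln t_m = m\ln m\,(1+o(1))\).

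\textbf{Step 3: comparison.} The function \(\tilde\Phi(t)=\frac{-\ln t}{\ln(-\ln t)}\) is increasing as \(t\to0\), since \(x/\ln x\) is increasing for large \(x\). Hence \(\tilde\Phi(\|T\sigma-T\rho\|_2)\ge\tilde\Phi(t_m)\), and it suffices to bound \(\tilde\Phi(t_m)\) from below. With \(x=-\ln t_m\) we have \(x\ge m\ln m(1-o(1))\) and \(\ln x=\ln m+\ln\ln m+O(1)=\ln m\,(1+o(1))\). This gives
\[\tilde\Phi(t_m)\ge m^{1-o(1)}=2^{K(1-o(1))}.\]
Consequently
\[C\tilde\Phi(\|T\sigma-T\rho\|_2)^{-\alpha}\le C\,2^{-\alpha K(1-o(1))}.\]
Since \(\alpha>\mu\), this is \(o(2^{-\mu K})\), while the left-hand side equals \(\sqrt2(1-2^{-\mu})2^{-\mu K}\). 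Thus the inequality holds for all sufficiently large \(K\).

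\textbf{Main obstacle.} The delicate point is Step 3: getting the asymptotics of \(\tilde\Phi\) sharp enough that the loss is only a factor \(m^{o(1)}\). This loss is precisely why the strict inequality \(\alpha>\mu\) is needed. I also have to apply monotonicity in the correct direction, since we only have an upper bound on \(\|T\sigma-T\rho\|_2\). If this becomes messy, I will instead prove the explicit estimate \(\tilde\Phi(t_m)\ge m/(1+\delta)\) for large \(m\) and any \(\delta>0\), and then choose \(\delta\) with \(\alpha/(1+\delta)>\mu\) (using \(2^{-\alpha K}\) versus \(m^{-\alpha}\) with \(m=2^K\)).
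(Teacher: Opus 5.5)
Your proposal is correct and follows the paper's proof essentially step for step. It uses the same pair \(\tilde\rho(\mu)\), \(\tilde\sigma(K,\mu)\), the Hilbert--Schmidt distance \(\sqrt2(1-2^{-\mu})2^{-\mu K}\), the bound \(|J_m(s)|\le (s/2)^m/m!\) to get factorial decay of the data misfit, and the estimate \(\tilde\Phi\ge 2^{K(1-o(1))}\), which turns \(\alpha>\mu\) into \(o(2^{-\mu K})\). You are slightly more careful than the paper: you state the monotonicity of \(\tilde\Phi\) near \(0\) that the paper uses tacitly, and you keep the correct factor \((2|g|)^m\) where the paper writes \(g^{2^k}\), though neither point affects the asymptotics.
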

\begin{proof}
    By construction, \(\tilde{\sigma}(k,\mu)\) fulfills the polynomial decay condition. The norm differences are bounded by
    \begin{align*}
        \|\tilde{\sigma}(k,\mu)-\tilde{\rho}(\mu)\|_{\calB_{2}}&=\sqrt{2}(1-2^{-\mu})2^{-\mu k}\in \Omega(2^{-\mu k}),\\
    \|T\tilde{\sigma}(k,\mu)-T\tilde{\rho}(\mu)\|_{2}&\leq2\pi\sqrt{2}(1-2^{-\mu})\frac{g^{2^{k}}}{(2^{k})!}2^{-\mu k}\in o((2^{k})^{-\nu 2^{k}}) \quad \forall\nu\in(0,1).
    \end{align*}
    We then compute
    \begin{align*}
        \tilde{\Phi}((2^{k})^{-\nu 2^{k}})^{-\alpha}=\left(\frac{\ln(-\ln((2^{k})^{-\nu 2^{k}}))}{-\ln((2^{k})^{-\nu 2^{k}})}\right)^{-\alpha}=\left(\frac{\ln\nu+k\ln2+\ln k+\ln\ln 2}{\nu 2^{k}k\ln2}\right)^{-\alpha}\in\calO(2^{-\alpha k}).
    \end{align*}
    Finally, because \(\alpha>\mu\) we have \(\tilde{\Phi}( \|T\tilde{\sigma}(k,\mu)-T\tilde{\rho}(\mu)\|_{2})^{-\alpha}\in o(2^{-\mu k})\) and we can choose \(k\) large enough such that the claimed inequality holds.
\end{proof}
These bounds again have the same form as the ones in \cite{Shi:20}, where a rate with \(\alpha=\mu-\frac{1}{2}\) was derived for \(\mu>\frac{1}{2}\). For the case \(0<\mu\leq\frac{1}{2}\) there is no upper bound so far.

An overview of the instability results compared to their counterparts from \cite{Shi:20} is given in \Cref{tab:stability}.
\begin{table}[h]
    \centering
    \begin{tabular}{lccc}
    \toprule
        decay type&\(\Phi(t)\)&\(\alpha\) stability&\(\alpha\) instability\\
\midrule
         band limited&\(t^{-1}\)&\(\frac{1}{1+2k}\)&\(>\frac{1}{1+2k}\)  \\

         exponential&\(e^{\sqrt{(-\ln t)(-\ln b)}}\)&\(1\)&\(>\sqrt{2}\) \\

         polynomial&\(\frac{-\ln t}{\ln(-\ln t)}\)&\(\mu-\frac{1}{2}>0\)&\(>\mu\) \\
         \bottomrule
    \end{tabular}
    \caption{Summary of stability results from \cite{Shi:20} and new instability results. The rate in each case is given by \(\Phi(t)^{-\alpha}\).}
    \label{tab:stability}
\end{table}

\section{Discretization}\label{sec:discretization}
There are different ways to derive discretizations of the operator \(T\) depending on how the unitary operators \(\Ucont\) are discretized. We consider the simplest one where \(\Ucont\) is restricted to a finite dimensional subspace by truncation and the set of measurement angles is given by \(\Theta\). 
\begin{definition}\label{def:disc_op_trunc}
    For \(\theta\in \Theta\) and \(N\in\setN\) define \(\trunc{U}_{N,\theta} :\setC^{N}\rightarrow\setC^{N}\) by
    \begin{align*}
        (\trunc{U}_{N,\theta} )_{k,l}&:=e^{i(k-l)\theta}J_{k-l}(2|g|)
    \end{align*}
    and
    \(\trunc{T}_{N,\Theta}:\Herm(N)\rightarrow \setR^{|\Theta|}\times\setR^{N}\)
    by
    \begin{align*}
        (\trunc{T}_{N,\Theta}\rho)(\theta,l)&=\langle e_{l},\trunc{U}_{N,\theta} \rho \trunc{U}_{N,\theta} ^{*}e_{l}\rangle\quad 0\leq l<N,\,\theta\in\Theta.
    \end{align*}
\end{definition}
Alternatively, this can be described as extending \(\rho\) by zero to get an infinite dimensional density operator, then applying \(T\) and finally restricting to a finite dimensional subspace and a finite set of angles.

This discretization has the disadvantage, that the discrete operators \(\trunc{U}_{N,\theta}\)  are no longer unitary, but it allows us to carry over many of the properties of the continuous operator to the discrete setting. For this we define the unitary discrete Fourier transform \(F_{N}:\setC^{N}\rightarrow\setC^{N}\) by
\begin{align*}
    (F_{N}v)_{k}:=\frac{1}{\sqrt{N}}\sum_{j=0}^{N-1}v_{j}e^{-i\frac{2\pi}{N}jk}.
\end{align*}
To interface between the discrete Fourier transform and Fourier series we define for a finite set \(\calI\subset\setZ\) the embedding operator \(Q_{\calI}:\setC^{|\calI|}\rightarrow\ell^{2}(\setZ)\) by
\begin{align*}
    (Q_{\calI}v)_m:=\begin{cases}
        v_{m}\quad\text{for}\quad m \in \calI\\
        0\quad\text{else}.
    \end{cases}
\end{align*}
Here we assume that \(v\) is indexed by \(\calI\). To simplify the notation when applying operators to two dimensional objects, we indicate along which argument the operator is applied by numbers in brackets e.g. \(F_{N}^{(2)}\) is the Fourier transform in the second argument. To further simplify the indexing we introduce the following index sets and a bijection between them: 
\begin{align*}
    \calI_{N}&:=\{0,\dots,2N-2\}\\
    \calJ_{N}&:=\{-N+1,\dots,N-1\}\\
    \varphi_{N}:\calI_{N}\rightarrow\calJ_{N},\,
    \varphi_{N}(n)&:=\begin{cases}
        n\quad0\leq n< N\\
        n-(2N-1)\quad N\leq n< 2N-1.
        \end{cases}
\end{align*}
Now we are ready to derive the decomposition of the discrete operator.
\begin{theorem}\label{the:decomp_trunc}
The discrete operator can be decomposed as
    \[(\trunc{T}_{N,\Theta}\rho)(\theta,l)=\calF^{*(1)}(Q_{\calJ_{N}}^{(1)}F_{2N-1}^{(2)}M_{\trunc{d}_{N}}F_{2N-1}^{*(2)}\trunc{G}_{N}(\rho))(\theta,l) \quad0\leq l<N,\,\theta\in[-\pi,\pi].\]
with
\begin{align*}
    \trunc{d}_{N}(k,l)&:=\sqrt{2N-1}F_{2N-1}^{*(2)}(K(2|g|)_{k,\cdot})(l)\quad (k,l)\in\calJ_{N}\times\calI_{N}\\
    K(2|g|)_{k,l}&:=\sqrt{2\pi}J_{\varphi_{N}(l)-k}(2|g|)J_{\varphi_{N}(l)}(2|g|)\quad (k,l)\in\calJ_{N}\times\calI_{N}
\end{align*}
and \(\trunc{G}_{N}:\Herm(N)\rightarrow (\calJ_{N}\times\calI_{N}\rightarrow \setC),\)
\begin{align*}
    (\trunc{G}_{N}\rho)_{m,n}:=\begin{cases}
        \rho_{n,m+n}\quad\text{for}\quad 0\leq m+n,n <N\\
        0\quad\text{else.}
    \end{cases}
\end{align*}
\end{theorem}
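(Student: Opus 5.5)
We follow the continuous decomposition in \Cref{the:decomp_cont}, now with finite sums, and identify the inner convolution in \(l\) as a cyclic convolution of length \(2N-1\).

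\textbf{Step 1: expand the entries.} For \(0\le l<N\) and \(\rho\in\Herm(N)\) we expand
\begin{align*}
(\trunc{T}_{N,\Theta}\rho)(\theta,l)=\sum_{m,n=0}^{N-1}e^{i(l-m)\theta}J_{l-m}(2|g|)\,\rho_{m,n}\,e^{-i(l-n)\theta}J_{l-n}(2|g|),
\end{align*}
using that the Bessel values are real. Substituting \(n=m+k\) gives
\begin{align*}
\sum_{k\in\calJ_N}e^{ik\theta}\sum_{m}(\trunc{G}_N\rho)_{k,m}\,J_{l-m}(2|g|)J_{l-m-k}(2|g|).
\end{align*}
Here \((\trunc{G}_N\rho)_{k,m}=\rho_{m,m+k}\) vanishes unless \(0\le m,m+k<N\), so the range \(k\in\calJ_N\) is exactly where nonzero entries can occur. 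The outer sum \(\sum_{k\in\calJ_N}e^{ik\theta}(\cdot)_k\) equals \(\sqrt{2\pi}\,\calF^{*}Q_{\calJ_N}\) applied in the first argument. The factor \(\sqrt{2\pi}\) is absorbed by the \(\sqrt{2\pi}\) built into \(K\).

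\textbf{Step 2: identify the cyclic convolution.} For fixed \(k\), the inner sum is a linear convolution in \(m\) of \(x_m:=(\trunc{G}_N\rho)_{k,m}\), supported in \(\{0,\dots,N-1\}\), with \(y_j:=J_j(2|g|)J_{j-k}(2|g|)\), evaluated at \(j=l-m\). Since \(0\le l,m<N\), we have \(j\in\calJ_N\). We store \(y\) on \(\calI_N\) via \(\varphi_N\), so that \(y_{\varphi_N(p)}=K(2|g|)_{k,p}/\sqrt{2\pi}\). The key observation is the absence of wrap-around. For \(0\le l<N\) and \(0\le m<N\), the residue \(p\equiv l-m \pmod{2N-1}\) in \(\calI_N\) satisfies \(\varphi_N(p)=l-m\) exactly, because \(l-m\in\calJ_N\) and \(\varphi_N\) is the canonical representative map. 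Hence the linear convolution coincides with the cyclic convolution of length \(2N-1\) of \(x\) (zero-padded to \(\calI_N\)) with \(K(2|g|)_{k,\cdot}\), restricted to \(0\le l<N\).

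\textbf{Step 3: diagonalize.} We apply the discrete convolution theorem with the unitary normalization. For vectors of length \(L=2N-1\),
\begin{align*}
x\cycconv z=\sqrt{L}\,F_L\big((F_L^{*}x)\cdot(F_L^{*}z)\big)
\end{align*}
holds up to the sign convention of the exponent. This yields \(F_{2N-1}^{(2)}M_{\trunc{d}_N}F_{2N-1}^{*(2)}\trunc{G}_N\rho\) with \(\trunc{d}_N=\sqrt{2N-1}F^{*(2)}_{2N-1}K\). Combining this with Step 1 gives the claimed formula.

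\textbf{Main obstacle.} The main obstacle is bookkeeping in Step 3. One must check that the sign conventions make \(F\) act in the first position and \(F^{*}\) on both factors, as written, rather than the reverse. One must also check that the \(\sqrt{2N-1}\) and \(\sqrt{2\pi}\) constants match. The convention issue is handled by verifying the identity directly on exponentials \(e^{\pm i2\pi jn/L}\). If the orientation comes out reversed, one instead reindexes \(K\) by \(p\mapsto -p\), which is harmless in \(\varphi_N\)-coordinates, and states the corresponding variant. The no-wrap-around argument of Step 2, by contrast, is short once \(\varphi_N\) is used.
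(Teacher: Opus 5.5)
Your proof is correct and follows essentially the same route as the paper: you expand the diagonal entries, reindex by the off-diagonal $k=n-m$, note that the inner sum coincides with a length-$(2N-1)$ cyclic convolution because $l-m\in\calJ_N$ and $\varphi_N$ returns exactly that representative, and diagonalize it with the unitary DFT, where the $\sqrt{2\pi}$ built into $K$ cancels the one from $\calF^{*}$. Your orientation worry is unnecessary, since $F_L^{*}$ diagonalizes cyclic convolution just as $F_L$ does, so $x\cycconv z=\sqrt{L}\,F_L\big((F_L^{*}x)\cdot(F_L^{*}z)\big)$ holds exactly and no reindexing of $K$ is needed.
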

\begin{proof}
    We follow the proof of \Cref{the:decomp_cont} for the continuous case and calculate
    \begin{align*}
        (\trunc{T}_{N,\Theta}\rho)(\theta,l)&=\sum_{j,k=0}^{N-1}e^{i(l-j)\theta}\jv{l-j}\rho_{j,k}e^{i(k-l)\theta}\jv{l-k}\\
        &=\sum_{n=-N+1}^{N-1}e^{in\theta}\sum_{j=\max(0,-n)}^{N-1-\max(0,n)}\jv{l-j}\jv{l-(n+j)}\rho_{j,n+j}\\
        &=\sum_{n=-N+1}^{N-1}e^{in\theta}\sum_{j=0}^{N-1}\jv{l-j}\jv{l-(n+j)}(\trunc{G}_{N}\rho)_{n,j}.\\
    \end{align*}
    The second sum can again be seen as a convolution. We extend it to get a cyclic convolution as follows and use the discrete Fourier convolution theorem
    \begin{align*}
        \sum_{j=0}^{N-1}\jv{l-j}&\jv{l-(n+j)}(\trunc{G}_{N}\rho)_{n,j}=\left[(\jv{\varphi_{N}(\cdot)}\jv{\varphi_{N}(\cdot)-n})\cycconv (\trunc{G}_{N}\rho)_{n,\cdot}\right](l)\\
        &=\sqrt{2N-1}F_{2N-1}\left[F_{2N-1}^{*}(\jv{\varphi_{N}(\cdot)}\jv{\varphi_{N}(\cdot)-n})\cdot F_{2N-1}^{*}(\trunc{G}_{N}\rho)_{n,\cdot}\right](l)\\
        &=\frac{1}{\sqrt{2\pi}}F_{2N-1}\left[\trunc{d}_{N}(n,\cdot)\cdot F_{2N-1}^{*}(\trunc{G}_{N}\rho)_{n,\cdot}\right](l).
    \end{align*}
    Inserting this into the previous equation results in the desired formula.
\end{proof}
We can use the Shannon sampling theorem \cite{Frazier:94} for the last Fourier transform as the Fourier series only has finitely many non-zero coefficients (one for each off-diagonal) and is therefore uniquely determined by \(M\geq 2N-1\) equidistant angles. As a corollary, we get a condition which guarantees injectivity.
\begin{corollary}
    For \(M\geq 2N-1\) and \(g\in \setR\), such that \(\trunc{d}_{N}\) is nonzero everywhere the operator \(\trunc{T}_{N,\Theta}\) is injective.
\end{corollary}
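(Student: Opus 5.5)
The plan is to read injectivity directly off the factorisation of \Cref{the:decomp_trunc}, checking one at a time that each factor is injective on the relevant space. Suppose \(\trunc{T}_{N,\Theta}\rho=0\) with \(\Theta=\{\theta_1,\dots,\theta_M\}\) equidistant in \([-\pi,\pi)\), \(M\geq 2N-1\). I will show that \(\trunc{G}_N\rho=0\). Since \(\trunc{G}_N\) only rearranges the entries \(\rho_{n,m+n}\) (every entry of \(\rho\) appears exactly once, namely \(\rho_{j,k}\) at \((m,n)=(k-j,j)\)), this forces \(\rho=0\).

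\textbf{Step 1 (angles).} For fixed \(l\), \(0\le l<N\), the decomposition gives
\[(\trunc{T}_{N,\Theta}\rho)(\theta,l)=\frac{1}{\sqrt{2\pi}}\sum_{m\in\calJ_N}c_{m,l}\,e^{im\theta},\qquad c_{m,l}:=\bigl(F_{2N-1}^{(2)}M_{\trunc{d}_N}F_{2N-1}^{*(2)}\trunc{G}_N\rho\bigr)_{m,l}.\]
This is a trigonometric polynomial in \(\theta\) whose frequencies lie in \(\calJ_N\), so it has \(2N-1\) coefficients. Sampling it at \(M\geq 2N-1\) equidistant angles determines it uniquely; this is the sampling statement quoted before the corollary. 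Concretely, the \(M\times(2N-1)\) matrix \((e^{im\theta_j})\) has \(2N-1\) orthogonal columns, because the frequencies are distinct modulo \(M\). Vanishing data therefore gives \(c_{m,l}=0\) for all \(m\in\calJ_N\) and all \(0\le l<N\).

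\textbf{Step 2 (the missing rows).} This is the main obstacle. The data only give the coefficients for \(l<N\), whereas the inner vector \(F_{2N-1}^{(2)}(\cdots)_{m,\cdot}\) has length \(2N-1\), so it is not immediately zero. I would return to the explicit formula in the proof of \Cref{the:decomp_trunc}: the entries for \(l<N\) equal the linear (non-cyclic) convolution
\[\sum_{j=0}^{N-1} J_{l-j}J_{l-m-j}\,(\trunc{G}_N\rho)_{m,j}.\]
The first attempt will be to check that the cyclic-extension indexing with \(\varphi_N\) lets the decomposition hold for every \(l\in\calI_N\), not only \(l<N\). This would require the measurement to include, or to be extendable to, the wrapped indices. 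If that checks out, then for each \(m\) the whole vector \(F_{2N-1}M_{\trunc{d}_N(m,\cdot)}F_{2N-1}^{*}(\trunc{G}_N\rho)_{m,\cdot}\) vanishes.

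The fallback, if \(l\) truly ranges only over \(0\le l<N\), uses the structure of the convolution directly. Exactly \(N-|m|\) unknowns \((\trunc{G}_N\rho)_{m,j}\) are nonzero, and we have \(N\) equations. For real \(g\) the kernel is a banded Toeplitz-type matrix whose invertibility would then have to follow from the nonvanishing of \(\trunc{d}_N\). I expect this to be the delicate point, and I would state precisely which index range the hypothesis on \(\trunc{d}_N\) covers.

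\textbf{Step 3 (inversion).} Assume the full \(\calI_N\) range is available. \(F_{2N-1}\) is unitary, and multiplication by \(\trunc{d}_N(m,\cdot)\) is invertible because \(\trunc{d}_N\) is nonzero everywhere. Hence \(F_{2N-1}^{*}(\trunc{G}_N\rho)_{m,\cdot}=0\), and so \((\trunc{G}_N\rho)_{m,\cdot}=0\) for every \(m\in\calJ_N\). Therefore \(\trunc{G}_N\rho=0\), which gives \(\rho=0\) as noted at the start. The assumption \(g\in\setR\) only enters through the hypothesis on \(\trunc{d}_N\) and through the real-valuedness of the Bessel kernel \(K(2|g|)\).
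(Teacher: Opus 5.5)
Step 1 is correct. Step 2 is a genuine gap that you flag but do not close, and Step 3 then simply assumes the full index range $\calI_N$ is available. It is not. The data contain only the rows $0\le l<N$, so the factorisation of \Cref{the:decomp_trunc} actually reads
\[
T_{N,\Theta}=E_\Theta\,\calF^{*(1)}Q^{(1)}_{\calJ_N}\,P\,F^{(2)}_{2N-1}M_{d_N}F^{*(2)}_{2N-1}G_N ,
\]
where $P$ restricts the second index from $\calI_N$ to $\{0,\dots,N-1\}$. This $P$ is not injective.

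Nonvanishing of $d_N(m,\cdot)$ only makes the $(2N-1)$-circulant $C^{(m)}$ with kernel $J_{\varphi_N(\cdot)}J_{\varphi_N(\cdot)-m}$ invertible. What you need is injectivity of its compression $PC^{(m)}$ on vectors supported on the $m$-th off-diagonal. That is full column rank of the $N\times(N-|m|)$ Toeplitz block $A^{(m)}_{l,j}=J_{l-j}(2|g|)J_{l-j-m}(2|g|)$, and invertibility of a circulant does not pass to such compressions. So your fallback cannot be derived from the hypothesis. Your first attempt fails because the wrapped rows $l\ge N$ are neither measured nor determined by the measured ones. The paper's proof has exactly this omission: it declares every factor other than $M_{d_N}$ injective and overlooks the restriction to $0\le l<N$.

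In fact the gap cannot be closed, because the statement is false as written. Abbreviate $J_k=J_k(2|g|)$, and take $N=2$ with $2|g|=j_{0,1}$, the first positive zero of $J_0$.

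\begin{itemize}
\item \textbf{Kernel of $T_{2,\Theta}$.} $U_{2,\theta}$ is antidiagonal, so $(T_{2,\Theta}\rho)(\theta,0)=J_1^2\rho_{1,1}$ and $(T_{2,\Theta}\rho)(\theta,1)=J_1^2\rho_{0,0}$. Hence the Hermitian matrix $\rho=e_0e_1^*+e_1e_0^*\neq0$ lies in the kernel for every $\Theta$.
\item \textbf{$d_2$ vanishes nowhere.} We have $K_{0,\cdot}=\sqrt{2\pi}(0,J_1^2,J_1^2)$, $K_{1,\cdot}=\sqrt{2\pi}(0,0,-J_1J_2)$ and $K_{-1,\cdot}=\sqrt{2\pi}(0,J_1J_2,0)$. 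Hence $d_2(0,\cdot)=\sqrt{2\pi}J_1^2(2,-1,-1)$ and $|d_2(\pm1,\cdot)|\equiv\sqrt{2\pi}|J_1J_2|$. These are nonzero since $J_1(j_{0,1})\neq0$ and $J_2(j_{0,1})=2J_1(j_{0,1})/j_{0,1}\neq0$.
\item \textbf{Where the information goes.} The unmeasured row $l=2$ of $C^{(1)}$ applied to the off-diagonal entry equals $-J_1J_2\rho_{0,1}\neq0$. This is exactly the information that $P$ discards.
\end{itemize}

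What your Step 1 does establish is the correct criterion. For $M\ge 2N-1$ equidistant angles, $T_{N,\Theta}$ is injective on $\Herm(N)$ if and only if $A^{(m)}$ has trivial kernel for every $0\le m<N$. The offset $-m$ gives only the complex-conjugate equations when $\rho$ is Hermitian. The hypothesis on $d_N$ must be replaced by this condition.
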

\begin{proof}
    Because of \(M\geq 2N-1\) the operator \(E_{\Theta}\calF^{*,(1)}Q_{\calJ_{N}}^{(1)}\), where \(E_{\Theta}\) is the operator that evaluates on all points in \(\Theta\), is injective by the sampling theorem. All other operators that comprise \(\trunc{T}_{N,\Theta}\) apart from \(M_{\trunc{d}_{N}}\) are injective. As a multiplication operator \(M_{\trunc{d}_{N}}\) is injective if and only if \(\trunc{d}_{N}\) is non-zero everywhere.
\end{proof}
Using this corollary the injectivity for a given \(g\) and \(N\) can then be checked easily by computing \(\trunc{d}_{N}\). In general, because \(\trunc{d}_{N}\) is holomorphic in \(|g|\) it has only isolated zeros and one can expect it to be non-zero in practice. In a similar way to our previous instability estimates, we can derive a lower bound on the inverse of the discretized operator. 

\begin{theorem}
    If \(g\), \(N\) and the number of angles \(M\) are chosen such that \(\trunc{T}_{N,\Theta}\) is invertible, then
    \begin{align*}
        \|\trunc{T}_{N,\Theta}^{-1}\|_{2}\geq\frac{((N-1)!)^{2}}{|g|^{N-1}\sqrt{(2N-2)!\cdot M}}
    \end{align*}
\end{theorem}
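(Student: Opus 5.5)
Since \(\|\trunc{T}_{N,\Theta}^{-1}\|_{2}\geq \|\rho\|/\|\trunc{T}_{N,\Theta}\rho\|\) for every nonzero \(\rho\in\Herm(N)\), it suffices to exhibit one test matrix whose image is very small. Following the construction in the proof of \Cref{the:not_uniform_cont}, we put all mass on the outermost off-diagonals. Take \(\rho\) with \(\rho_{0,N-1}=\rho_{N-1,0}=1\) and all other entries zero. Then \(\|\rho\|_{2}=\sqrt{2}\), which is Hermitian and admissible.

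Next we compute \(\trunc{T}_{N,\Theta}\rho\) directly from the first line of the proof of \Cref{the:decomp_trunc}. Only \(n=\pm(N-1)\) contribute, and we get
\begin{align*}
(\trunc{T}_{N,\Theta}\rho)(\theta,l)=2\,\mathrm{Re}\!\left(e^{i(N-1)\theta}\right)\jv{l}\jv{l-(N-1)}.
\end{align*}
Hence \(|(\trunc{T}_{N,\Theta}\rho)(\theta,l)|\leq 2|\jv{l}\jv{N-1-l}|\) uniformly in \(\theta\) (using \(J_{-m}=(-1)^mJ_m\)). Summing over the \(M\) angles gives
\[
\|\trunc{T}_{N,\Theta}\rho\|_2^2\leq 4M\sum_{l=0}^{N-1}\jv{l}^2\jv{N-1-l}^2.
\]

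The key step is bounding this sum. The standard bound \(|J_m(x)|\leq (x/2)^m/m!\) for \(m\geq0\) gives
\[
|\jv{l}\jv{N-1-l}|\leq \frac{|g|^{N-1}}{l!\,(N-1-l)!}=\frac{|g|^{N-1}}{(N-1)!}\binom{N-1}{l}.
\]
Then the Vandermonde identity \(\sum_l\binom{N-1}{l}^2=\binom{2N-2}{N-1}=\frac{(2N-2)!}{((N-1)!)^2}\) yields
\[
\|\trunc{T}_{N,\Theta}\rho\|_2^2\leq 4M\frac{|g|^{2N-2}(2N-2)!}{((N-1)!)^4}.
\]
This produces exactly the claimed combination \(((N-1)!)^2/(|g|^{N-1}\sqrt{(2N-2)!\,M})\), but with the prefactor \(\sqrt2/2\).

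The main obstacle is this constant. I would remove it by choosing the test matrix as \(\rho_{0,N-1}=i\), \(\rho_{N-1,0}=-i\) (still Hermitian) and the phase more carefully, or by bounding \(|2\mathrm{Re}(\cdot)|^2\) and summing over \(\theta\) with care. The cleanest fix is a norm normalisation. If the \(\ell^2\) norm on data is taken over the \(M\) angles with the measure \(2\pi/M\), or if \(\|\cdot\|_2\) on \(\Herm(N)\) is the Hilbert–Schmidt norm with \(\|\rho\|^2=2\), then comparing \(\sqrt2\) against \(2\) loses only the factor \(1/\sqrt2\). I would check the normalisation the paper uses for the discrete data norm and the Bessel bound, which might be applied in the sharper form with the \(2\mathrm{Re}\) handled as \(|e^{i(N-1)\theta}+e^{-i(N-1)\theta}|\leq 2\). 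If needed, I would tighten the bound via the Cauchy–Schwarz pairing of the two terms \(n=\pm(N-1)\), which are identical in modulus. This would produce the stated bound, up to verifying the constant.
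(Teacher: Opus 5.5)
You use the same test matrix as the paper ($\rho_{0,N-1}=\rho_{N-1,0}=1$), the same formula $2\cos((N-1)\theta)\,J_{l}(2|g|)J_{l-(N-1)}(2|g|)$, and the same Bessel bound plus Vandermonde identity for the $l$-sum. But, as you note yourself, the argument as written only proves the inequality with an extra factor $1/\sqrt{2}$, so it does not establish the stated bound. The loss comes from bounding $4\cos^{2}((N-1)\theta)\le 4$ pointwise, which gives $4M$ for the angular factor. The missing step is to compute this factor exactly. The paper uses that the $M\ge 2N-1$ angles are equidistant, $\theta_m=2\pi m/M$, so that
\begin{align*}
\sum_{m=0}^{M-1}4\cos^{2}\Bigl((N-1)\tfrac{2\pi m}{M}\Bigr)=2M+2\sum_{m=0}^{M-1}\cos\Bigl(2(N-1)\tfrac{2\pi m}{M}\Bigr)=2M .
\end{align*}
Here $0<2(N-1)<M$ (for $N\ge 2$), so $e^{2\pi i\,2(N-1)/M}\neq 1$ and the sum over a full set of $M$-th roots of unity vanishes. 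Since $\|T_{N,\Theta}\rho\|_2^2$ factors exactly as (angular sum)$\times$($l$-sum), replacing $4M$ by $2M$ turns $\sqrt{2}/\|T_{N,\Theta}\rho\|_2$ into exactly the claimed constant.

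None of the remedies you list closes the gap. The norms on $\Herm(N)$ and on the data are fixed by the statement, so renormalising them changes the theorem rather than proving it. The estimate $|e^{i(N-1)\theta}+e^{-i(N-1)\theta}|\le 2$ is precisely the pointwise bound that loses the factor. Likewise, Cauchy--Schwarz on the two terms $n=\pm(N-1)$ cannot do better than $4$, since that value is attained at $\theta=0$; only cancellation across the angle set helps.

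Your phase-shifted matrix $\rho_{0,N-1}=i$, $\rho_{N-1,0}=-i$ can be made to work, and even without assuming equidistant angles, but only with an observation you did not make. That matrix has the same Hilbert--Schmidt norm $\sqrt{2}$ and the same $l$-sum, with angular factor $\sum_{\theta\in\Theta}4\sin^{2}((N-1)\theta)$. Since the cosine and sine angular sums add up to $4M$, one of the two test matrices has angular factor at most $2M$. Either this choice or the exact roots-of-unity computation is needed to finish the proof.
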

\begin{proof}
Similar to the construction in \Cref{the:not_uniform_cont} we consider the hermitian matrix 
\begin{align*}
   \eta(N)_{m,n}:=\begin{cases}
    1\quad (m=0\text{ and } n=N-1)\text{ or }(n=N-1\text{ and } n=0)\\
    0\quad \text{ else.}
\end{cases} 
\end{align*}
If the inverse of \(T_{N,\Theta}\) exists, its norm is bounded from below as follows
\begin{align*}
    \|T_{N,\Theta}^{-1}\|=\sup_{y\in T_{N,\Theta}(\Herm(N))\setminus\{0\}}\frac{\|T_{N,\Theta}^{-1}y\|_{2}}{\|y\|_{2}}\geq \frac{\|\eta(N)\|_{2}}{\|T_{N,\Theta}\eta(N)\|_{2}}=\frac{\sqrt{2}}{\|T_{N,\Theta}\eta(N)\|_{2}}.
\end{align*}
We now compute \(T_{N,\Theta}\eta(N)\) by directly using \Cref{def:disc_op_trunc}
\begin{align*}
    (T_{N,\Theta}\eta(N))(l,\theta)&=\sum_{m,n=0}^{N-1}e^{i(l-m)\theta}\jv{l-m}\eta(N)_{m,n}e^{i(n-l)\theta}\jv{l-n}\\&=e^{i(N-1)\theta}\jv{l}\jv{l-(N-1)}+e^{-i(N-1)\theta}\jv{l-(N-1)}\jv{l}\\
    &=2\cos((N-1)\theta)\jv{l-(N-1)}\jv{l}.
\end{align*}
The squared norm \(\|T_{N,\Theta}\eta(N)\|_{2}^{2}\) can be split into two factors
\begin{align*}
    \|T_{N,\Theta}\eta(N)\|_{2}^{2}=\left(\sum_{\theta\in\Theta}4\cos((N-1)\theta)^{2}\right)\left(\sum_{l=0}^{N-1}(\jv{l-(N-1)}\jv{l})^{2}\right)
\end{align*}
By inserting the assumption that \(M\) equidistant angles \(\theta\) are used and that \(M\geq 2N-1\) we can compute the first factor
\begin{align*}
    \sum_{m=0}^{M-1}4\cos\left((N-1)\frac{2\pi m}{M}\right)^{2}&=2\sum_{m=0}^{M-1}\left(1+\cos\left((N-1)\frac{4\pi m}{M}\right)\right)=2M
\end{align*}
as the sum over the cosines vanishes. We bound the second factor by using the bound for the Bessel functions \cite[Eq.~10.14.4]{Lozier:03} \[J_{n}(2x)\leq\frac{x^{|n|}}{|n|!} \quad n\in\setN.\]
This leads to 
\begin{align*}
    \sum_{l=0}^{N-1}(\jv{l-(N-1)}\jv{l})^{2}&\leq\sum_{l=0}^{N-1}\left(\frac{|g|^{|l-(N-1)|}}{|l-(N-1)|!}\frac{|g|^{|l|}}{|l|!}\right)^{2}=\sum_{l=0}^{N-1}\left(\frac{|g|^{N-1}}{((N-1)-l)!l!}\right)^{2}\\
    &=\frac{|g|^{2(N-1)}}{((N-1)!)^{2}}\sum_{l=0}^{N-1}\binom{N-1}{l}^{2}=\frac{|g|^{2(N-1)}}{((N-1)!)^{2}}\binom{2(N-1)}{N-1}\\
    &=\frac{|g|^{2(N-1)}(2(N-1))!}{((N-1)!)^{4}}.
\end{align*}
Combining the bounds for both factors and taking the square root results in the claimed lower bound of the inverse.
\end{proof}
Due to the squared factorial this bound increases super exponentially showing the numerical instability of the unconstrained discrete problem. However, this increase can be delayed by the choice of a large enough \(|g|\) and a rough numerical approximation suggests, that the value \(|g|\) has to be at least of size \(\frac{N}{5}\) to prevent a blowup of the lower bound in the previous theorem. This is only a heuristic, as we just have a lower bound and the norm of \(\trunc{T}_{N,\Theta}^{-1}\) could still be much larger.

\section{Numerical examples}\label{sec:numerics}
We confirm our theoretical results by testing them in computational examples. The code for their reproduction is available at \cite{Oberender:26stabdata}. Throughout this section, we will use the discretization discussed in \Cref{sec:discretization} and try to avoid discretization artifacts by choosing sufficiently large matrix sizes. For the instability results, we use the constructed example sequences from \Cref{sec:instability} and compute the difference of the norms of the matrices and the corresponding data. To underline the stability we consider the inverse problem and we investigate the reconstruction qualities without regularization. To be able to perform our calculations efficiently, we use the decomposition from \Cref{the:decomp_trunc} together with the fast Fourier transform for the evaluation of the operator \(T\) and its adjoint. This reduces the asymptotic complexity from \(\calO(MN^{2})\) to \(\calO(MN(\log M+\log N))\) for an \(N\times N\) matrix and \(M\) different measurement angles.

\subsection{Numerical validation of instability estimates}
 To further reduce the computational effort we consider the operator \(\tilde{T}_{N}=M_{\trunc{d}_{N}}F_{2N-1}^{*(2)}\trunc{G}_{N}(\rho)\) analogously to the continuous case as the final two Fourier transforms do not impact the stability. For all examples we use matrices of size \(N=3000\) which is much larger than any physically attainable examples, but it is necessary to lower the impact of discretization errors.
We start with the simple example
\begin{align*}
        \rho_{n,m}(k)=\begin{cases}
            \frac{1}{2} \quad n=m=0\\
            \frac{1}{2} \quad n=m=k\\
            0 \quad\text{else}
        \end{cases}\quad\sigma_{n,m}(k)=\begin{cases}
            \frac{1}{2} \quad n=0 \text{ and }(m=0 \text{ or }m=k)\\
            \frac{1}{2} \quad n=k \text{ and }(m=0 \text{ or }m=k)\\
            0 \quad\text{else}
        \end{cases}\quad
\end{align*}
from the proof of \Cref{the:not_uniform_cont} to show that general rates without additional constraints on the domain are not possible. In \Cref{fig:gen_band}(a) we plot the errors \(\|\tilde{T}_{N}\rho(k)-\tilde{T}_{N}\sigma(k)\|_{2}\) for different values of \(|g|\) and observe that after an initial plateau which is dependent on \(|g|\) the norm of the results decrease rapidly while the norm of \(\|\rho(k)-\sigma(k)\|_{F}\) stays constant by definition. 
\begin{figure}
\begin{subfigure}[b]{0.45\textwidth}
    \centering
\begin{tikzpicture}[scale=0.75]

\definecolor{crimson2143940}{RGB}{214,39,40}
\definecolor{darkgray176}{RGB}{176,176,176}
\definecolor{darkorange25512714}{RGB}{255,127,14}
\definecolor{forestgreen4416044}{RGB}{44,160,44}
\definecolor{lightgray204}{RGB}{204,204,204}
\definecolor{mediumpurple148103189}{RGB}{148,103,189}
\definecolor{steelblue31119180}{RGB}{31,119,180}

\begin{axis}[
legend cell align={left},
legend style={
  fill opacity=0.8,
  draw opacity=1,
  text opacity=1,
  at={(0.03,0.03)},
  anchor=south west,
  draw=lightgray204
},
log basis y={10},
tick align=outside,
tick pos=left,
x grid style={darkgray176},
xlabel={\(\displaystyle k\)},
xmin=-7.75, xmax=206.75,
xtick style={color=black},
y grid style={darkgray176},
ylabel={\(\displaystyle\|\tilde{T}\rho(k)-\tilde{T}\sigma(k)\|_{2}\)},
ymin=1e-100, ymax=261238.172498916,
ymode=log,
ytick style={color=black},
ytick={1e-80,1e-60,1e-40,1e-20},
yticklabels={
  \(\displaystyle {10^{-80}}\),
  \(\displaystyle {10^{-60}}\),
  \(\displaystyle {10^{-40}}\),
  \(\displaystyle {10^{-20}}\)
}
]
\addplot [thick, steelblue31119180]
table {%
7 0.00423036106871031
12 0.00113240045961861
17 1.2939959040385e-05
22 2.5553848694568e-08
27 1.45191767944581e-11
32 3.10278617787256e-15
37 2.95277681859362e-19
42 1.4071919714563e-23
47 3.66274334610317e-28
52 5.56769512280118e-33
57 5.21353674744032e-38
62 3.1412140151151e-43
67 1.26277061918992e-48
72 3.49254866369707e-54
77 6.82308958707175e-60
82 9.63280230245641e-66
87 1.00262350994557e-71
92 7.83075241045414e-78
97 4.66198783584407e-84
102 2.1456091814668e-90
107 7.73119013510208e-97
112 2.20619303557056e-103
117 5.03823573578186e-110
122 9.29602184856778e-117
127 1.39798758891418e-123
132 1.72742681080359e-130
137 1.76694120049286e-137
142 1.50650065053188e-144
147 1.07752660158482e-151
152 6.50419318794909e-159
157 0
162 0
167 0
172 0
177 0
182 0
187 0
192 0
197 0
};
\addlegendentry{g=2.7}
\addplot [thick, darkorange25512714]
table {%
2 0.0033686511883776
7 0.00295044121039859
12 0.00270810315549545
17 0.00255702343673278
22 0.00229243608530633
27 0.00233474554415871
32 0.0020941482646802
37 0.00224261560620726
42 0.00055952645533452
47 3.84166040425074e-05
52 1.07953948546133e-06
57 1.50173538544149e-08
62 1.1595112032842e-10
67 5.37242063358438e-13
72 1.58201724812588e-15
77 3.09522601381882e-18
82 4.16954372038433e-21
87 3.98243042588344e-24
92 2.76435220151848e-27
97 1.42426838136195e-30
102 5.54739076640443e-34
107 1.65976833935966e-37
112 3.86926934152918e-41
117 7.11752228225861e-45
122 1.04492214399608e-48
127 1.2369628158818e-52
132 1.19179793462632e-56
137 9.42596636182606e-61
142 6.16771176250081e-65
147 3.36299438595356e-69
152 1.53823949151545e-73
157 5.93880361666429e-78
162 1.94645769286645e-82
167 5.44487163153814e-87
172 1.30647586200319e-91
177 2.70161803007538e-96
182 4.83580930639697e-101
187 7.52381681140081e-106
192 1.02147893266274e-110
197 1.21463836327617e-115
};
\addlegendentry{g=10}
\addplot [thick, forestgreen4416044]
table {%
2 0.00261300752960344
7 0.00224164013677549
12 0.00206819438579672
17 0.00199791713873856
22 0.00193014769205481
27 0.00186406976974627
32 0.00179290900804943
37 0.00168840825000655
42 0.00165429078121373
47 0.00167320117184585
52 0.00157372944262856
57 0.00151335847906368
62 0.00155556440678374
67 0.00153912956143622
72 0.00137427960064497
77 0.00146037717190552
82 0.000470737228340077
87 6.55723025264136e-05
92 4.91680577065505e-06
97 2.22976207118899e-07
102 6.58006455745173e-09
107 1.32977437497022e-10
112 1.9119065639309e-12
117 2.01492767382608e-14
122 1.59459923935019e-16
127 9.66854680607096e-19
132 4.56854031397487e-21
137 1.70712135235817e-23
142 5.109471172648e-26
147 1.23883245778314e-28
152 2.45775386522149e-31
157 4.02597343782387e-34
162 5.48973036578456e-37
167 6.27767009896649e-40
172 6.06117302415124e-43
177 4.97197524192304e-46
182 3.48504786361091e-49
187 2.09847879572844e-52
192 1.09084027491502e-55
197 4.91787160076439e-59
};
\addlegendentry{g=20}
\addplot [thick, crimson2143940]
table {%
2 0.00220711424636465
7 0.00191998298738163
12 0.00182751334079354
17 0.00171594345805887
22 0.0016340829720433
27 0.00159594905071389
32 0.00156573803985845
37 0.0015315248062587
42 0.00149227022279943
47 0.00144335030292992
52 0.00139273022028483
57 0.00138929486696936
62 0.00140292794913332
67 0.00131570427357359
72 0.00136187282657085
77 0.00126752355614861
82 0.00131693844933449
87 0.00131052857692016
92 0.0012496961456205
97 0.00121086901536945
102 0.00121515830379219
107 0.00128935619195853
112 0.00121854548485014
117 0.00113420484050598
122 0.000416680503543536
127 7.98861653785164e-05
132 9.33146560142724e-06
137 7.24324204544724e-07
142 3.95123453558221e-08
147 1.57599387193092e-09
152 4.73539873775146e-11
157 1.09733088007352e-12
162 1.99889215881516e-14
167 2.90819020376005e-16
172 3.42540559377219e-18
177 3.30467841935055e-20
182 2.63822344848898e-22
187 1.75866493581652e-24
192 9.86838438045789e-27
197 4.69516760191704e-29
};
\addlegendentry{g=30}
\addplot [thick, mediumpurple148103189]
table {%
2 0.00194317584252342
7 0.00174146205369618
12 0.00162015578897805
17 0.00154459311263063
22 0.00151288691912565
27 0.00145571962308896
32 0.00139924068492077
37 0.00136557327067527
42 0.00134402713830007
47 0.0013231150553622
52 0.00129890054092146
57 0.00127029675417688
62 0.0012397718499822
67 0.00122114881033169
72 0.00122939573853534
77 0.00122625155032581
82 0.00116965295415799
87 0.00118098132937105
92 0.00116414968791781
97 0.00114551766091404
102 0.00112474848978793
107 0.00115250822687336
112 0.00108877382894861
117 0.00106876026574586
122 0.0010901520498832
127 0.00110184898674521
132 0.00109470335993485
137 0.0010581662502208
142 0.000983740286595716
147 0.00103072338346228
152 0.00110953584564872
157 0.000948584666953878
162 0.00037926153728545
167 8.81667303364769e-05
172 1.34450885693646e-05
177 1.44278676398093e-06
182 1.14094302936847e-07
187 6.87203057386341e-09
192 3.23245444343551e-10
197 1.21114053500956e-11
};
\addlegendentry{g=40}
\end{axis}

\end{tikzpicture}
    \caption{}
\end{subfigure}
\begin{subfigure}[b]{0.45\textwidth}
    \centering
    \input{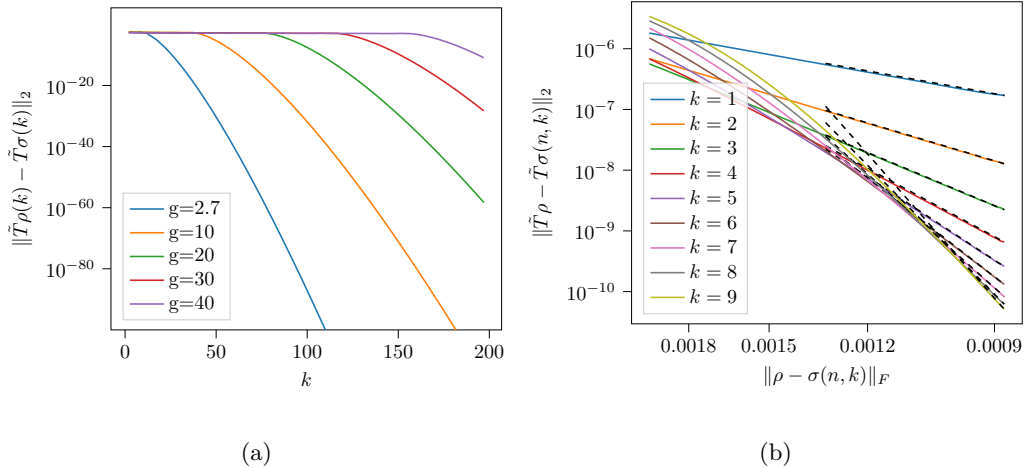}
    \caption{}
\end{subfigure}
\caption{(a) Difference of norms of results for different values of \(|g|\) plotted against the index of the largest non-zero off-diagonal (b) Convergence of norms for band-limited matrices for different limiting values k. The black dashed lines mark the rates from \Cref{the:bandlimited}.}
\label{fig:gen_band}
\end{figure}

For band-limited matrices we use restrictions of the operators \(\sigma(n,k,\epsilon), \rho(\epsilon)\) from \Cref{the:bandlimited} with \(\epsilon=10^{-15}\) and we fix \(g=2.7\) for all further examples. In \Cref{fig:gen_band}(b) we can see a loglog-plot of \(\|\tilde{T}_{N}\rho(\epsilon)-\tilde{T}_{N}\sigma(n,k,\epsilon)\|_{2}\) and \(\|\rho(\epsilon)-\sigma(n,k,\epsilon)\|_{F}\) for different \(k\). The values of \(n\) are scaled such that the norms of \(\|\rho(\epsilon)-\sigma(n,k,\epsilon)\|_{F}\) are in the same range for different \(k\). We can see that the norms converge with the rates predicted in \Cref{the:bandlimited}. 

For the case of exponential decay we use the example sequence from \Cref{the:exponential}. We observe in the loglog-plot in \Cref{fig:exppol}(a) that the rates are not close to the claimed ones and the convergence stops and becomes erratic after a certain point. This is caused by the fact that in this example the parameter \(n\) which controls the decay of the off-diagonal entries along the off-diagonal is exponentially increasing causing slower and slower decay. Therefore, the discretization error quickly dominates and prevents us from achieving the claimed rates in practice. From the shape of the curves, we can still see that as claimed the parameter \(b\) does not seem to influence the convergence further if it is already included in \(\Phi\). Additionally, the trend at the beginning suggest that the curves get steeper before later settling into linear behavior with the claimed rate which we cannot observe because of the truncation errors.
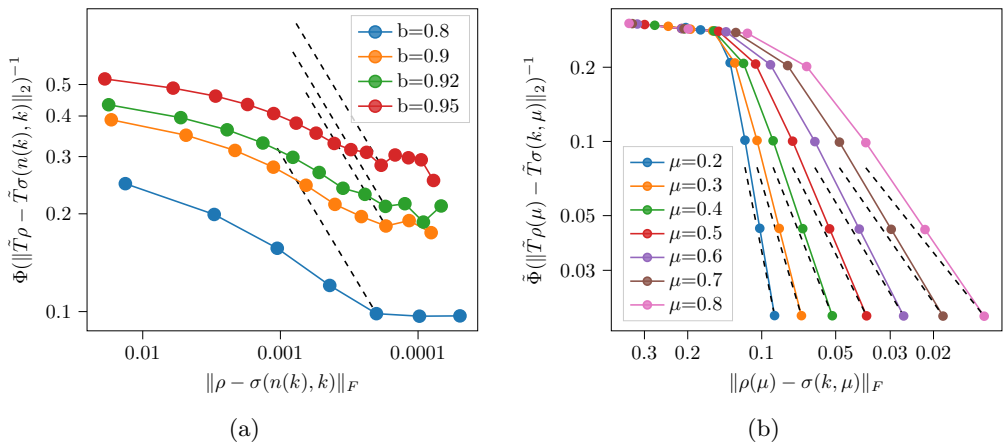
\begin{figure}
\begin{subfigure}[b]{0.45\textwidth}
    \centering
\begin{tikzpicture}[scale=0.75]

\definecolor{crimson2143940}{RGB}{214,39,40}
\definecolor{darkgray176}{RGB}{176,176,176}
\definecolor{darkorange25512714}{RGB}{255,127,14}
\definecolor{forestgreen4416044}{RGB}{44,160,44}
\definecolor{lightgray204}{RGB}{204,204,204}
\definecolor{steelblue31119180}{RGB}{31,119,180}

\begin{axis}[
legend cell align={left},
legend style={fill opacity=0.8, draw opacity=1, text opacity=1, draw=lightgray204},
log basis x={10},
log basis y={10},
tick align=outside,
tick pos=left,
x grid style={darkgray176},
xlabel={\(\displaystyle \|\rho-\sigma(n(k),k)\|_{F}\)},
xmin=39.3996636366369, xmax=27121.345085469,
xmode=log,
xtick style={color=black},
xtick={100,1000,10000},
xticklabels={0.01,0.001,0.0001},
y grid style={darkgray176},
ylabel={\(\displaystyle \Phi(\|\tilde{T}\rho-\tilde{T}\sigma(n(k),k)\|_{2})^{-1}\)},
ymin=0.0872631569165359, ymax=0.85542761110019,
ymode=log,
ytick style={color=black},
ytick={0.1,0.2,0.3,0.4,0.5},
yticklabels={
  \(\displaystyle {0.1}\),
  \(\displaystyle {0.2}\),
  \(\displaystyle {0.3}\),
  \(\displaystyle {0.4}\),
  \(\displaystyle {0.5}\)
}
]
\addplot [thick, steelblue31119180, mark=*, mark size=3, mark options={solid}]
table {%
74.7741544117887 0.247633687616076
330.458198262343 0.199109988123167
948.578873168034 0.15667167603036
2281.92616638037 0.120366058837139
4982.9516123463 0.0985509265576004
10234.7847368937 0.0968037966203434
20152.06081169 0.0970073859765302
};
\addlegendentry{b=0.8}
\addplot [thick, black, dashed, forget plot]
table {%
948.578873168034 0.318470922889066
4982.9516123463 0.0985509265576004
};
\addplot [thick, darkorange25512714, mark=*, mark size=3, mark options={solid}]
table {%
59.0808133464204 0.389447456953384
206.303426000251 0.349097892551186
467.905672324541 0.313413355514578
889.367789617831 0.278357352210897
1534.48058468785 0.244852123814813
2490.28039353812 0.213837390570758
3874.21231493338 0.196128090102872
5843.67862165455 0.183457953088952
8608.55178824831 0.190612253180355
12447.4769207546 0.174973513828982
};
\addlegendentry{b=0.9}
\addplot [thick, black, dashed, forget plot]
table {%
1534.48058468785 0.47224903219502
5843.67862165455 0.183457953088952
};
\addplot [thick, forestgreen4416044, mark=*, mark size=3, mark options={solid}]
table {%
56.5400033187382 0.433231514248599
188.940547074876 0.395428546706019
410.096820803825 0.362798679766576
745.965689932682 0.330330486542036
1231.70929281421 0.298472482305433
1912.9535861536 0.268068500487072
2848.05887958654 0.239690979677133
4111.13019422947 0.229590404105967
5795.81296172289 0.210803059650444
8020.01067769789 0.214812310014361
10931.7156266288 0.188625526639491
14716.1946137744 0.211423493532373
};
\addlegendentry{b=0.92}
\addplot [thick, black, dashed, forget plot]
table {%
1231.70929281421 0.630204174110202
5795.81296172289 0.210803059650444
};
\addplot [thick, crimson2143940, mark=*, mark size=3, mark options={solid}]
table {%
53.0254391213808 0.5202878613539
166.18126351578 0.487403833862322
338.276426977981 0.460593881274564
577.07549016141 0.433924555886658
893.614975326263 0.407001826125048
1301.59267765698 0.380288767026682
1817.38973504749 0.354258920097797
2460.30404602742 0.329272176707467
3252.89731622085 0.315179611908017
4221.42793584753 0.309180800263348
5396.36351628251 0.282094221403391
6812.97552666049 0.303441361457056
8512.02280623261 0.297508008590804
10540.5335344493 0.293244141940776
12952.697422869 0.253300631717826
};
\addlegendentry{b=0.95}
\addplot [thick, black, dashed, forget plot]
table {%
1301.59267765698 0.771119692246514
5396.36351628251 0.282094221403391
};
\end{axis}

\end{tikzpicture}
    \caption{}
\end{subfigure}
\begin{subfigure}[b]{0.45\textwidth}
    \centering
\begin{tikzpicture}[scale=0.75]

\definecolor{crimson2143940}{RGB}{214,39,40}
\definecolor{darkgray176}{RGB}{176,176,176}
\definecolor{darkorange25512714}{RGB}{255,127,14}
\definecolor{forestgreen4416044}{RGB}{44,160,44}
\definecolor{lightgray204}{RGB}{204,204,204}
\definecolor{mediumpurple148103189}{RGB}{148,103,189}
\definecolor{orchid227119194}{RGB}{227,119,194}
\definecolor{sienna1408675}{RGB}{140,86,75}
\definecolor{steelblue31119180}{RGB}{31,119,180}

\begin{axis}[
legend cell align={left},
legend style={fill opacity=0.8, draw opacity=1, text opacity=1, at={(0.03,0.03)},
  anchor=south west,draw=lightgray204},
log basis x={10},
log basis y={10},
tick align=outside,
tick pos=left,
x grid style={darkgray176},
xlabel={\(\displaystyle \|\rho(\mu)-\sigma(k,\mu)\|_{F}\)},
xmin=2.44361547132342, xmax=94.9448129507037,
xmode=log,
xtick style={color=black},
xtick={3.33333333333333,5,10,20,33.3333333333333,50},
xticklabels={0.3,0.2,0.1,0.05,0.03,0.02},
y grid style={darkgray176},
ylabel={\(\displaystyle \tilde{\Phi}(\|\tilde{T}\rho(\mu)-\tilde{T}\sigma(k,\mu)\|_{2})^{-1}\)},
ymin=0.0170480792949748, ymax=0.345356646293983,
ymode=log,
ytick style={color=black},
ytick={0.03,0.05,0.1,0.2},
yticklabels={
  \(\displaystyle {0.03}\),
  \(\displaystyle {0.05}\),
  \(\displaystyle {0.1}\),
  \(\displaystyle {0.2}\)
}
]
\addplot [thick, steelblue31119180, mark=*, mark size=2, mark options={solid}]
table {%
4.90906501596675 0.289746350721548
5.6390349084145 0.283482211029827
6.47755012306659 0.280366542050913
7.44075117077743 0.208565369986762
8.54717862981429 0.101154707657402
9.81813003193349 0.0444140584988837
11.278069816829 0.0196560298265912
};
\addlegendentry{$\mu$=0.2}
\addplot [thick, black, dashed, forget plot]
table {%
8.54717862981429 0.0786241193063651
11.278069816829 0.0196560298265912
};
\addplot [thick, darkorange25512714, mark=*, mark size=2, mark options={solid}]
table {%
4.16603035387814 0.293207625571703
5.12898499600242 0.285394206647152
6.31452122395828 0.280867626832964
7.77408752782414 0.2081073326328
9.57102442873508 0.100921515746646
11.7833132554249 0.0443534077734114
14.5069602851095 0.0196418978552897
};
\addlegendentry{$\mu$=0.3}
\addplot [thick, black, dashed, forget plot]
table {%
9.57102442873508 0.0785675914211591
14.5069602851095 0.0196418978552897
};
\addplot [thick, forestgreen4416044, mark=*, mark size=2, mark options={solid}]
table {%
3.67073806885336 0.295927096205127
4.84356792022723 0.286560619528667
6.39112618710564 0.280630369137886
8.4331415626337 0.207262736319054
11.1275970045629 0.10061279246569
14.6829522754134 0.0442805168366846
19.3742716809089 0.0196256854077487
};
\addlegendentry{$\mu$=0.4}
\addplot [thick, black, dashed, forget plot]
table {%
11.1275970045629 0.0785027416309946
19.3742716809089 0.0196256854077487
};
\addplot [thick, crimson2143940, mark=*, mark size=2, mark options={solid}]
table {%
3.33876938854894 0.297992266178173
4.72173295092203 0.287082448241945
6.67753877709787 0.27977104376737
9.44346590184406 0.206100843848503
13.3550775541957 0.100241774885417
18.8869318036881 0.0441974189462015
26.7101551083915 0.019607726021562
};
\addlegendentry{$\mu$=0.5}
\addplot [thick, black, dashed, forget plot]
table {%
13.3550775541957 0.0784309040862482
26.7101551083915 0.019607726021562
};
\addplot [thick, mediumpurple148103189, mark=*, mark size=2, mark options={solid}]
table {%
3.11752314500823 0.299501199790166
4.72528147736857 0.287067035075227
7.16218741667227 0.278408086283185
10.8558461199025 0.20468936789026
16.4543858074238 0.0998211629801074
24.9401851600658 0.0441060624803403
37.8022518189486 0.019588339803692
};
\addlegendentry{$\mu$=0.6}
\addplot [thick, black, dashed, forget plot]
table {%
16.4543858074238 0.078353359214768
37.8022518189486 0.019588339803692
};
\addplot [thick, sienna1408675, mark=*, mark size=2, mark options={solid}]
table {%
2.97370300251679 0.300548016561377
4.83079477969199 0.286614626870679
7.84764927222002 0.276649370952441
12.748543854248 0.203088280905745
20.7100705913309 0.0993621054510494
33.6436089330307 0.0440081588767137
54.6542039558525 0.0195678083009752
};
\addlegendentry{$\mu$=0.7}
\addplot [thick, black, dashed, forget plot]
table {%
20.7100705913309 0.0782712332039008
54.6542039558525 0.0195678083009752
};
\addplot [thick, orchid227119194, mark=*, mark size=2, mark options={solid}]
table {%
2.88589193898863 0.301215458597635
5.02462970619658 0.285811954785253
8.74838844216775 0.274586615102749
15.2318289725249 0.201347445558334
26.5201545841235 0.0988738448443815
46.174271023818 0.0439051296291866
80.3940752991453 0.0195463656429771
};
\addlegendentry{$\mu$=0.8}
\addplot [thick, black, dashed, forget plot]
table {%
26.5201545841235 0.0781854625719084
80.3940752991453 0.0195463656429771
};
\end{axis}

\end{tikzpicture}
    \caption{}
\end{subfigure}
\caption{(a) Convergence of norms of matrices with exponential decay. This case is severely corrupted by truncation errors due to the nature of the required sequence. The black dashed lines mark the rate from \Cref{the:exponential} and they roughly mark the point where large truncation errors begin to appear. (b) Convergence of norms for matrices with polynomial decay with different decay rates \(\mu\). The black dashed lines mark the rates from \Cref{the:polynomial}.}
\label{fig:exppol}
\end{figure}

Finally, we consider the case of polynomial decay. We use the example from \Cref{the:polynomial} for different \(\mu\). Due to the exponential increase of the required matrix size and the very small size of the norms in the image space we are again limited in the range of examples we can numerically investigate. The results shown in \Cref{fig:exppol}(b) still agree quite well with the predicted rates. This difference to the results of exponential decay can be explained by the fact, that in the polynomial case the nonzero entries are limited to a finite submatrix and therefore the truncation errors are smaller.  

Apart from the case of exponential decay, for which an important part of the example sequence is numerically unattainable, these numerical results support our theoretical findings.

\subsection{Numerical stability of the inverse problem}
Now we consider the stability of the inverse problem. While the discrete problem is always theoretically well-posed because of the restriction to a finite dimensional subspace, we will show that the additional restriction to the set of positive semi-definite matrices significantly increases the stability. To achieve this, we first consider the example of a (simulated) PINEM state. We add different amounts of white noise and solve the problems
\begin{align}
    \underset{\rho\in\BopIpsdtr}{\argmin}&\|T_{N,\Theta}\rho-\yobs\|_{2}^{2}\label{prob:const}\\
    \underset{\rho\in\Herm(N)}{\argmin}&\|T_{N,\Theta}\rho-\yobs\|_{2}^{2}\label{prob:unconst}
\end{align}
with and without the constraint to the set of valid density matrices. Usually the noise is modeled as Poisson noise, but we consider Gaussian white noise here instead to see the effect of non-zero entries corresponding to off-diagonals which are far away from the main diagonal. We orient ourselves on the supplementary material of \cite{Priebe:17} and choose \(N=41\) and \(M=100\) different equally spaced angles \(\theta\). We simulate a PINEM state with a pump frequency of \(g_{\text{pump}}=1.73\) and include a slight phase jitter to get a state that is not pure. We then solve the problems  (\ref{prob:const}) and (\ref{prob:unconst}) for different noise levels and different values of \(|g|\). We solve the unconstrained system using the CG method and we use projected gradient descent (see \cite{Bolduc:17}) for the constrained problem. In both cases we stop the iteration if the residual is smaller than \(1.1\) times the noise level. The results in \Cref{fig:conv}(a) show that for larger \(|g|\) the constraint greatly improves the convergence behavior while for smaller \(|g|\) it does not make a huge difference. This can be explained by the observation that the positive semi-definiteness constraint is especially useful for suppressing noise in high order off-diagonals, but has little influence on the closer off-diagonals. If \(|g|\) is chosen too small the low order off-diagonals already do not contribute much to the measurement and cannot be reconstructed well. We also compute the condition numbers of \(T_{N,\Theta}^{*}T_{N,\Theta}\). They are between \(5.72\cdot 10^{27}\) for \(g=6.92\) and \(1.62\cdot 10^{70}\) for \(g=0.865\) showing that as expected the linear system is severely ill-conditioned. The CG-solver which is used to numerically obtain reasonable results therefore also regularizes the problem by early stopping \cite[Chapter~7]{Engl:96}. This explains the better than expected convergence behavior of the unconstrained problem. 

\begin{figure}
\begin{subfigure}[b]{0.45\textwidth}
    \centering
    \input{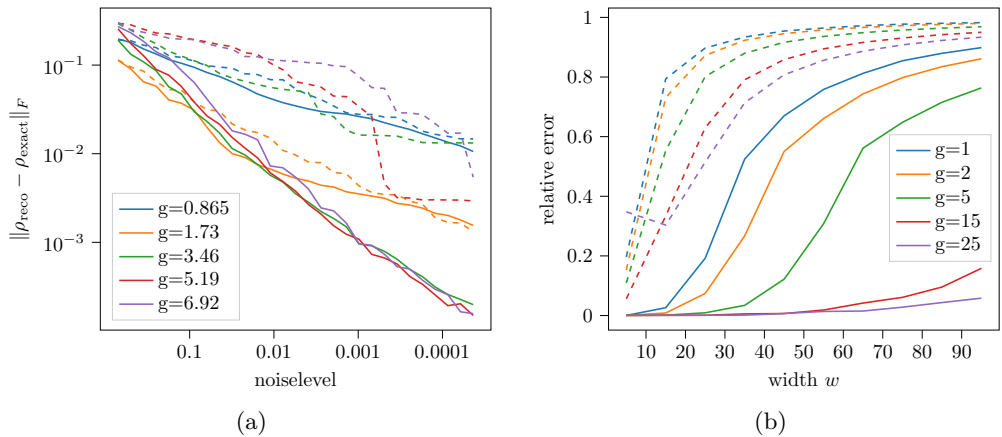}
    \caption{}
\end{subfigure}
\begin{subfigure}[b]{0.45\textwidth}
    \centering
\begin{tikzpicture}[scale=0.75]

\definecolor{crimson2143940}{RGB}{214,39,40}
\definecolor{darkgray176}{RGB}{176,176,176}
\definecolor{darkorange25512714}{RGB}{255,127,14}
\definecolor{forestgreen4416044}{RGB}{44,160,44}
\definecolor{lightgray204}{RGB}{204,204,204}
\definecolor{mediumpurple148103189}{RGB}{148,103,189}
\definecolor{steelblue31119180}{RGB}{31,119,180}

\begin{axis}[
legend cell align={left},
legend style={
  fill opacity=0.8,
  draw opacity=1,
  text opacity=1,
  at={(0.98,0.41)},
  anchor=east,
  draw=lightgray204
},
tick align=outside,
tick pos=left,
x grid style={darkgray176},
xlabel={width \(w\)},
xmin=0.5, xmax=99.5,
xtick style={color=black},
xtick={10,20,30,40,50,60,70,80,90},
xticklabels={
  \(\displaystyle 10\),
  \(\displaystyle 20\),
  \(\displaystyle 30\),
  \(\displaystyle 40\),
  \(\displaystyle 50\),
  \(\displaystyle 60\),
  \(\displaystyle 70\),
  \(\displaystyle 80\),
  \(\displaystyle 90\)
},
y grid style={darkgray176},
ylabel={relative error},
ymin=-0.0488085031062922, ymax=1.0314116568501,
ytick style={color=black}
]
\addplot [thick, steelblue31119180]
table {%
5 0.000350662484309059
15 0.0262234490318801
25 0.192149114662186
35 0.524769381972554
45 0.669819731047951
55 0.758340077027453
65 0.812232313708964
75 0.854572538567842
85 0.879461186091604
95 0.898578993302656
};
\addlegendentry{g=1}
\addplot [thick, steelblue31119180, dashed, forget plot]
table {%
5 0.19614152766659
15 0.794128652877379
25 0.897188635820757
35 0.934080652517153
45 0.95421780550495
55 0.964251535529903
65 0.972288878827961
75 0.977216261132066
85 0.980197771709386
95 0.982310740488443
};
\addplot [thick, darkorange25512714]
table {%
5 0.000420374962801603
15 0.00855431437685747
25 0.0740298254722191
35 0.267160735606004
45 0.550340352985785
55 0.660624687155995
65 0.74334502371595
75 0.797842655965207
85 0.834711177063922
95 0.861116703927532
};
\addlegendentry{g=2}
\addplot [thick, darkorange25512714, dashed, forget plot]
table {%
5 0.15296647269215
15 0.731408942516316
25 0.872385009439072
35 0.92327441877717
45 0.945523408986183
55 0.958193138723864
65 0.966353717694948
75 0.972343395408373
85 0.976457676205902
95 0.979893838615035
};
\addplot [thick, forestgreen4416044]
table {%
5 0.000292413255361826
15 0.00123816937826743
25 0.0092038907941443
35 0.0341969198553126
45 0.121942895563856
55 0.307807298047426
65 0.561331505519207
75 0.64813547112259
85 0.71513027555546
95 0.763629023446055
};
\addlegendentry{g=5}
\addplot [thick, forestgreen4416044, dashed, forget plot]
table {%
5 0.109305318222973
15 0.553576386110613
25 0.801511367418451
35 0.879807650580109
45 0.916462622382311
55 0.936030217083638
65 0.948822115387131
75 0.957609828112523
85 0.963867633800982
95 0.968609163316451
};
\addplot [thick, crimson2143940]
table {%
5 0.0004830103498261
15 0.000629168285753984
25 0.000936603358700611
35 0.00555764648526923
45 0.0066686856310395
55 0.0184810657500403
65 0.0417752781651995
75 0.0609908209778896
85 0.0955192635858573
95 0.158376623633589
};
\addlegendentry{g=15}
\addplot [thick, crimson2143940, dashed, forget plot]
table {%
5 0.0560266667521243
15 0.334087613235171
25 0.62986940784544
35 0.790808543035562
45 0.858636512401064
55 0.894418670061107
65 0.916108512194674
75 0.930896161409466
85 0.942097117345372
95 0.950092456184298
};
\addplot [thick, mediumpurple148103189]
table {%
5 0.000720736156575445
15 0.000768364860999673
25 0.000912461596653899
35 0.00125128320850029
45 0.00682050062611694
55 0.0136851655663004
65 0.0150821695804476
75 0.0280512392897257
85 0.0434565488645184
95 0.0583347000618215
};
\addlegendentry{g=25}
\addplot [thick, mediumpurple148103189, dashed, forget plot]
table {%
5 0.347785618317889
15 0.303377815341775
25 0.51480144006489
35 0.713550530411464
45 0.807545913557193
55 0.856116141841629
65 0.886680943997495
75 0.907852201513282
85 0.922718168521445
95 0.933704801067144
};
\end{axis}

\end{tikzpicture}
    \caption{}
\end{subfigure}
\caption{(a) Comparison of reconstruction errors with restriction (regular) and without restriction (dashed) for different noise levels. (b) Comparison of relative reconstruction errors for matrices \(\rho(w)\) with different width \(w\) from exact data. The reconstructions without restriction are again marked by dashed lines.}
\label{fig:conv}
\end{figure}

Finally, we take a closer look at the stability for larger matrices and the relationship between \(g\) and the support of the reconstructed density matrix. For this we consider matrices with size \(N=201\) and measurements with \(500\) different equally spaced angles. To simplify the experiments we consider matrices of the following form
\begin{align*}
    \rho(w)_{j,k}=\begin{cases}
        \frac{1}{w} \quad\text{ if } \lfloor\frac{N-w}{2}\rfloor\leq j,k<\lfloor\frac{N-w}{2}\rfloor+w\\
        0 \quad\text{ else.}
    \end{cases}    
\end{align*}
They consists of a constant centered square with width \(w\). We then reconstruct the density matrices from exact simulated data with and without constraint with the same algorithms as before. In \Cref{fig:conv}(b) we can see the relative error \(\frac{\|\rho_{\text{reco}}-\rho(w)\|_{F}}{\|\rho(w)\|_{F}}\) in relation to the width \(w\). The plot clearly shows the stabilizing effect of the constraint and the importance of choosing \(|g|\) sufficiently large to reconstruct matrices with larger width. In the unconstrained case however the effect of choosing a larger \(|g|\) is much smaller due to the instability.

All in all our findings suggest, that for practical applications with finite matrices and sufficiently large \(|g|\) the constraint to the set of density matrices is enough to achieve a stable reconstruction and no further regularization is necessary. This observation agrees with the more recent findings from the experimental physics community \cite{Jeng:25} which prefer reconstructions without additional regularization.
\section{Conclusions}\label{sec:conclusion}
We have shown the stability of the constrained PINEM quantum tomography problem which demonstrates the regularizing effect of the positive semi-definiteness and trace constraint. This analysis does not result in any general stability estimates and we have shown that such estimates do not exists as the inverse is continuous but not uniformly continuous on the non-compact restricted domain. We furthermore investigated the optimality of the rates shown under the assumption of additional decay conditions in \cite{Shi:20} and found that the Hölder exponent in the case of band-limited matrices cannot be improved. For the cases of polynomially and exponentially decaying off-diagonals we got lower bounds which differ from the previously derived rates only by a relatively small constant. All in all, this answers the open theoretical questions posed in \cite{Shi:20}.

We also showed that the decomposition of the forward operator into Fourier transforms and multiplication operators can be done in the discrete case. This allowed us to show that \(2N-1\) angles are enough to uniquely determine a density matrix of size \(N\times N\). The derived decomposition is also relevant for numerical computations as it can be computed much faster than previous methods by exploiting the fast Fourier transform. We additionally computed a lower bound on the norm of the inverse of the discrete forward operator which provides a more detailed understanding on how the instability relates to the experimental parameters. 

Our numerical experiments suggest, that for suitably chosen experimental parameters and truncation of the domain regularization in the form of penalty terms as it is done in the SQUIRRELS algorithm \cite{Priebe:17} is not necessary and the restriction to the set of density matrices is already enough to regularize the problem. This agrees with our theoretical stability result. However, we also observed that if the experimental parameter \(|g|\) is chosen too small, off-diagonals which are further away cannot be recovered well even with the additional constraint and without the presence of noise. Further research could investigate whether in this case additional apriori information or different reconstruction algorithms could improve the recovery. 
\section*{Data Availability Statement}
The code associated with this article is available in `GRO.data', under the reference

\noindent https://doi.org/10.25625/IPG2F4.
\appendix
\section{Proof of \Cref{lem:seq_prop}}\label{sec:proof_seq}
\begin{proof}
The proof consists of four parts. First we show that the matrices are valid density matrices. Because we normalize with the trace, we just have to show that they are positive semi-definite. We do this by applying \Cref{lem:gershgorin} and then it remains to prove that \(\tau(\epsilon)_{l,l}\geq\eta(n,k,\epsilon)_{l,l+k}+\eta(n,k,\epsilon)_{l,l-k}\). 
For this, we first show the bound \(\tau(\epsilon)_{l,l}\geq n^{-\epsilon}\hat{h}^{(n)}(l)\). It is enough to consider the case \(l>0\) and compute
\begin{align*}
    \frac{n^{-\epsilon}\hat{h}^{(n)}(l)}{\tau(\epsilon)_{l,l}}=\frac{1}{2}n^{1-\epsilon}\frac{1-\cos(\frac{l}{n})}{l^{2}}l^{1+\epsilon}=\frac{1}{2}\frac{1-\cos(\frac{l}{n})}{\left(\frac{l}{n}\right)^{1-\epsilon}}\leq\frac{1}{2} \sup_{t\in(0,\infty)}\frac{1-\cos(t)}{t^{1-\epsilon}}\leq 1.
\end{align*}
Note that this bound holds even though \(\tau(\epsilon)\) does not depend on \(n\).

With this we can compute
    \begin{align*}
        \eta(n,k,\epsilon)_{l,l+k}+\eta(n,k,\epsilon)_{l,l-k}&\leq\frac{1}{2}(2|k|)^{-1-\epsilon}(\tau(\epsilon)_{l,l}+\tau(\epsilon)_{l-k,l-k})\\
        &\leq(2|k|)^{-1-\epsilon}\max(\tau(\epsilon)_{l,l},\tau(\epsilon)_{l-k,l-k}).
    \end{align*}
    We then only consider the case where the maximum is attained for the second value and get
    \begin{align*}
        (2|k|)^{-1-\epsilon}\tau(\epsilon)_{l-k,l-k}&=\begin{cases}
            (2|l|)^{-1-\epsilon}\tau(\epsilon)_{0,0}\quad l=k\\
            (2|k|)^{-1-\epsilon}(|l-k|)^{-1-\epsilon}\tau(\epsilon)_{0,0}\quad l\neq k\\
        \end{cases}\\
        &\leq \tau(\epsilon)_{l,l}.
    \end{align*}
    This finally shows \(\eta(n,k,\epsilon)_{l,l+k}+\eta(n,k,\epsilon)_{l,l-k}\leq\tau(\epsilon)_{l,l}\) and finishes the proof of the positive semi-definiteness.

    The Hilbert-Schmidt norm can be computed using Parseval's identity
    \begin{align*}
    \|\eta(n,k,\epsilon)\|_{2}^{2}&=\frac{1}{2}\sum_{l}\left((2|k|)^{-1-\epsilon}n^{-\epsilon}\hat{h}^{(n)}(l)\right)^{2}=\frac{1}{2}(2|k|)^{-2-2\epsilon}n^{-2\epsilon}\sum_{l}\left(\hat{h}^{(n)}(l)\right)^{2}\\
    &=\frac{1}{2}(2|k|)^{-2-2\epsilon}n^{-2\epsilon}\int_{-\pi}^{\pi}h^{(n)}(x)^{2}\dd x=(2|k|)^{-2-2\epsilon}n^{-2\epsilon}\frac{1}{3n}\\
    \|\eta(n,k,\epsilon)\|_{2}&=\frac{1}{\sqrt{3}}(2|k|)^{-1-\epsilon}n^{-\frac{1}{2}-\epsilon}.
\end{align*}
We can also compute the sum of the absolute values of the non-zero off-diagonal where we use that all values are non-negative and we can ignore the absolute values
\begin{align*}
    \sum_{l\in\setZ}|\eta(n,k,\epsilon)_{l,l+k}|&=n^{-\epsilon}\frac{1}{2}(2|k|)^{-1-\epsilon}\sum_{l\in\setZ}|\hat{h}^{(n)}(l)|=n^{-\epsilon}\frac{1}{2}(2|k|)^{-1-\epsilon}\sum_{l\in\setZ}\hat{h}^{(n)}(l)\\
    &=n^{-\epsilon}\frac{1}{2}(2|k|)^{-1-\epsilon}h^{(n)}(0)=n^{-\epsilon}\frac{1}{2}(2|k|)^{-1-\epsilon}.
\end{align*}
Note that this also holds for the case \(k<0\) as this just translates the Fourier coefficients. 

We can also bound \(\|T\eta(n,k,\epsilon)\|_{2}\) as follows using the decomposition from \Cref{the:decomp_cont}
\begin{align*}
    \|T\eta(n,k,\epsilon)\|_{2}^{2}&=\frac{1}{2}(2|k|)^{-2-2\epsilon}n^{-2\epsilon}2\pi\int_{-\pi}^{\pi}J_{k}(4|g|\sin(\frac{x}{2}))^{2}h^{(n)}(x)^{2}\dd x.
\end{align*}
Using the upper bound for the Bessel function \cite{Abramowitz:65}  we obtain
\begin{align*}
    \|T\eta(n,k,\epsilon)\|_{2}^{2}&\leq\frac{1}{2}(2|k|)^{-2-2\epsilon}n^{-2\epsilon}2\pi\int_{-\frac{1}{n}}^{\frac{1}{n}}
    \frac{|4|g|\sin(\frac{x}{2})|^{2k}}{2^{2k}(k!)^2}h^{(n)}(x)^{2}\dd x\\
     &\leq\frac{1}{2}(2|k|)^{-2-2\epsilon}n^{-2\epsilon}2\pi\frac{|g|^{2k}}{(k!)^{2}}\int_{-\frac{1}{n}}^{\frac{1}{n}}x^{2k}h^{(n)}(x)^{2}\dd x\\
     &\leq2(2|k|)^{-2-2\epsilon}n^{-2\epsilon}2\pi\frac{|g|^{2k}}{(k!)^{2}}\frac{1}{(2k+1)(2k+2)(2k+3)}n^{-2k-1}\\
     \|T\eta(n,k,\epsilon)\|_{2}&\leq2\sqrt{\pi}\frac{(2k)^{-1-\epsilon}|g|^{k}}{k!\sqrt{(2k+1)(2k+2)(2k+3)}}n^{-k-\frac{1}{2}-\epsilon}.
\end{align*}
\end{proof}

\bibliographystyle{abbrv}
\bibliography{references}

\end{document}